% ------------------------------------------------------------------------
% bjourdoc.tex for birkjour.cls*******************************************
% ------------------------------------------------------------------------
%%%%%%%%%%%%%%%%%%%%%%%%%%%%%%%%%%%%%%%%%%%%%%%%%%%%%%%%%%%%%%%%%%%%%%%%%%

\documentclass{birkjour}
%
%
% THEOREM Environments (Examples)-----------------------------------------
%
 \newtheorem{thm}{Theorem}[section]
 
 \newtheorem{lem}[thm]{Lemma}
 
 \theoremstyle{definition}
 
 \theoremstyle{remark}

 \numberwithin{equation}{section}
\usepackage[english]{babel}

\def\diag{{\rm diag}}

\def\R{{\mathbb R}}
\def\C{{\mathbb C}}

\def\Spin{{\rm Spin}}
\def\Pin{{\rm Pin}}

\def\cl{{C}\!\ell}
\def\Cl{{\rm C}}
\def\ad{{\rm ad}}
\def\Aut{{\rm Aut}}

\def\G{{\rm G}}

\def\P{{\rm P}}
\def\A{{\rm A}}
\def\B{{\rm B}}
\def\Q{{\rm Q}}
\def\Z{{\rm Z}}
\def\mod{{\,\rm mod}}
\def\la{\langle}
\def\ra{\rangle}

  \usepackage{tikz}

\begin{document}

%-------------------------------------------------------------------------
% editorial commands: to be inserted by the editorial office
%
%\firstpage{1} \volume{228} \Copyrightyear{2004} \DOI{003-0001}
%
%
%\seriesextra{Just an add-on}
%\seriesextraline{This is the Concrete Title of this Book\br H.E. R and S.T.C. W, Eds.}
%
% for journals:
%
%\firstpage{1}
%\issuenumber{1}
%\Volumeandyear{1 (2004)}
%\Copyrightyear{2004}
%\DOI{003-xxxx-y}
%\Signet
%\commby{inhouse}
%\submitted{March 14, 2003}
%\received{March 16, 2000}
%\revised{June 1, 2000}
%\accepted{July 22, 2000}
%
%
%
%---------------------------------------------------------------------------
%Insert here the title, affiliations and abstract:
%

\title[On inner automorphisms preserving subspaces of Clifford algebras]
 {On inner automorphisms preserving fixed\\ subspaces of Clifford algebras}

%----------Author 1
\author[Dmitry Shirokov]{Dmitry Shirokov}

\address{%
HSE University\\
Myasnitskaya str. 20\\
101000 Moscow\\
Russia}
\address{
Institute for Information Transmission Problems of Russian Academy of Sciences\\
Bolshoy Karetny per. 19\\
127051 Moscow\\
Russia}
\email{dm.shirokov@gmail.com}

%\thanks{This work is supported by the grant of the President of the Russian Federation (project MK-404.2020.1).}
%%----------Author 2
%\author{A Second Author}
%\address{The address of\br
%the second author\br
%sitting somewhere\br
%in the world}
%\email{dont@know.who.knows}
%%----------classification, keywords, date
\subjclass{Primary 15A66; Secondary 11E88}

\keywords{Clifford algebra, geometric algebra, inner automorphism, Lie group, Lipschitz group, Clifford group, spin group}

\date{May, 2020}
%----------additions
%\dedicatory{To my boss}
%%% ----------------------------------------------------------------------

\begin{abstract}
In this paper, we consider inner automorphisms that leave invariant fixed subspaces of real and complex Clifford algebras -- subspaces of fixed grades and subspaces determined by the reversion and the grade involution. We present groups of elements that define such inner automorphisms and study their properties. Some of these Lie groups can be interpreted as generalizations of Clifford, Lipschitz, and spin groups. We study the corresponding Lie algebras. Some of the results can be reformulated for the case of more general algebras -- graded central simple algebras or graded central simple algebras with involution.
\end{abstract}

%%% ----------------------------------------------------------------------
\maketitle
%%% ----------------------------------------------------------------------
%\tableofcontents

\section{Introduction}\label{sectIntr}

In this paper, we study inner automorphisms that leave invariant fixed fundamental subspaces of the real and complex Clifford algebras.  We present groups of elements that define such inner automorphisms.  Some of these groups are associated with the vector subspace $V$ of the Clifford algebra $\cl(V,Q)$ \cite{Chev, Port} over a quadratic space $(V, Q)$ with a nondegenerate quadratic form $Q$: well-known Clifford group $\Gamma$, which preserves the subspace $V$ and is widely used in the theory of spin groups; the groups $\Gamma^k$, $k=0, 1, \ldots, n$, which preserve the subspaces of fixed grades $k$. The others are naturally defined by the main involution (the grade involution) and the reversion. We introduce the groups $\P$ (preserve subspaces of fixed parity), $\A$, $\B$, $\Q$, and $\Q^\prime$ (preserve some other fundamental subspaces determined by the reversion and the grade involution) and study their properties. As one of the anonymous reviewers noted, these subspaces can be defined in more general algebras, and some of the results of this paper can be reformulated for the more general case. Namely, the groups $\P$ (see Section \ref{sectP}) can be introduced in the graded central simple algebras (GCSAs) introduced by  Wall in 1964 \cite{Wall}. The groups $\A$, $\B$, $\Q$ (see Sections \ref{sectA}, \ref{sectB}, \ref{sectQ}) can be introduced in the graded central simple algebras with involution (GCSAsWI) introduced by  Wall in 1968 \cite{Wall2}\footnote{The area of interest of the author of this paper is mostly the applications of some specific algebras (as Clifford algebras) in physics, in particular, in the field theory. Therefore, let us present results only for the particular case of Clifford algebras in this paper. One can reformulate the statements for the more general cases of GCSAs or GCSAsWI, if the need arises for some purpose. We accompanied the statements of this paper with footnotes, including those received from the reviewer, about their possible reformulation for the  more general case.}. Let us also note the paper \cite{Helm} on GSCAs and GSCAsWI, including, as a special case, the Clifford algebras. Clifford algebras are widely used in different applications -- in engineering, physics, robotics, computer vision, image and signal processing, etc. Groups that preserve different structures of Clifford algebras under similarity transformation may be of interest to these applications. All these groups contain spin groups as subgroups and can be considered as generalizations of Clifford, Liptshitz, and spin groups.

%The main goal of this work is the mathematical development of the theory. All of these groups contain spin groups as subgroups and can be used in different applications, in particular, in physics.

%\section{Preliminaries}

Let $\Cl$ be the real Clifford algebra $\cl(V,Q)$, $V=\R^{p,q}$, $p+q=n$ (or the geometric algebra \cite{Hestenes, Lasenby}) or the complex Clifford algebra $\cl(\C^n)$ \cite{Lounesto, Port, LM, BT}. We denote the identity element by $e$ and the generators of $\Cl$ by $e_a$, $a=1, \ldots, n$. In the case of the real Clifford algebra, the generators $e_a$, $a=1, \ldots, n$ satisfy
$$e_a e_b+e_b e_a=2\eta_{ab}e,\qquad a, b=1, \ldots, n,$$
where $\eta=(\eta_{ab})=\diag(1,\ldots, 1, -1, \ldots, -1)$ is the diagonal matrix with its first $p$ entries equal to $1$ and the last $q$ entries equal to $-1$ on the diagonal. In the case of the complex Clifford algebra, the generators satisfy the same conditions but with the identity matrix $\eta=I_n$ of size $n$. Consider the subspaces $\Cl^k$ of grades $k=0, 1, \ldots, n$. The elements of these subspaces are linear combinations of the basis elements $e_{a_1 \ldots a_k}:=e_{a_1}\cdots e_{a_k}$, $a_1 < a_2 < \cdots < a_k$, with multi-indices of length $k$. The Clifford algebra $\Cl$ can be represented as the direct sum of the even and odd subspaces
\begin{eqnarray*}
&&\Cl=\Cl^{(0)}\oplus \Cl^{(1)},\qquad \Cl^{(0)}=\!\!\bigoplus_{k=0 \mod 2}\!\! \Cl^k,\qquad \Cl^{(1)}=\!\!\bigoplus_{k=1 \mod 2}\!\! \Cl^k,\\
&&\Cl^{(j)}=\{U\in \Cl:\, \hat{U}=(-1)^j U\},\quad j=0, 1,
\end{eqnarray*}
where $\hat{\quad}$ is the grade involution (or the main involution). Also we consider the following four subspaces (see \cite{quat, quat1, quat2}), which are naturally determined by the grade involution $\hat{\quad}$ and the reversion $\tilde{\quad}$\,:
\begin{eqnarray}
&&\Cl^{\overline{m}}:=\!\!\bigoplus_{k=m \mod 4}\!\! \Cl^k,\qquad m=0, 1, 2, 3,\nonumber\\
&&\Cl^{\overline{m}}=\{U\in \Cl:\,  \hat{U}=(-1)^m U,\, \tilde{U}=(-1)^{\frac{m(m-1)}{2}} U\}.\label{quat}
\end{eqnarray}
The algebra $\Cl$ is a $Z_2\times Z_2$-graded algebra w.r.t. these four subspaces and the operations of commutator $[U,V]=UV-VU$ and anticommutator $\{U, V\}=UV+VU$:
\begin{eqnarray}
&&[\Cl^{\overline{m}}, \Cl^{\overline{m}}]\subset \Cl^{\overline{2}},\quad [\Cl^{\overline{m}}, \Cl^{\overline{2}}]\subset \Cl^{\overline{m}},\quad m=0, 1, 2, 3,\nonumber\\
&&[\Cl^{\overline{0}}, \Cl^{\overline{1}}]\subset \Cl^{\overline{3}},\quad [\Cl^{\overline{0}}, \Cl^{\overline{3}}]\subset \Cl^{\overline{1}},\quad [\Cl^{\overline{1}}, \Cl^{\overline{3}}]\subset \Cl^{\overline{0}},\nonumber\\
&&\{\Cl^{\overline{m}}, \Cl^{\overline{m}}\}\subset \Cl^{\overline{0}},\quad \{\Cl^{\overline{m}}, \Cl^{\overline{0}}\}\subset \Cl^{\overline{m}},\quad m=0, 1, 2, 3,\nonumber\\
&&\{\Cl^{\overline{1}}, \Cl^{\overline{2}}\}\subset \Cl^{\overline{3}},\quad \{\Cl^{\overline{2}}, \Cl^{\overline{3}}\}\subset \Cl^{\overline{1}},\quad \{\Cl^{\overline{3}}, \Cl^{\overline{1}}\}\subset \Cl^{\overline{2}}.\nonumber
\end{eqnarray}
We use the notation $A^\times$ for the set of invertible elements of any set $A$. Consider the adjoint representation (inner automorphism) $\ad: \Cl^{\times}\rightarrow \Aut \Cl$ acting on the group of all invertible elements of the Clifford algebra
$$\Cl^{\times}=\{T\in \Cl: \,\exists \, T^{-1}\}$$
as $T\mapsto \ad_T$, where $ \ad_T U=TUT^{-1}$ for any $U\in \Cl$. The kernel of $\ad$ is
$$\ker(\ad)=\{T\in \Cl^\times:\quad \ad_T(U)=U \quad \forall U\in \Cl\}=\Z^\times,$$
where $\Z^\times$ is the group of all invertible elements of the center of $\Cl$
\begin{equation}\label{center}
\Z:=\left\lbrace
\begin{array}{ll}
\Cl^0, & \mbox{if $n$ is even,}\\
\Cl^0\oplus \Cl^n, & \mbox{if $n$ is odd.}
\end{array}
\right.
\end{equation}
Let us consider the well-known Clifford group \cite{BT, Chev, Port, Bulg}, which consists of elements that define inner automorphisms preserving the subspace of grade~$1$:
\begin{eqnarray}
\Gamma&:=&\{T\in \Cl^\times:\quad T \Cl^{1}T^{-1}\subseteq \Cl^{1}\}\label{2.2}\\
&=&\{ W v_1 \cdots v_m:\quad m\leq n,\quad W\in \Z^\times,\quad v_j\in \Cl^{\times 1}\},\label{2.3}
\end{eqnarray}
where $\Cl^{\times 1}=\{v_j\in \Cl^1: v_j^2\neq 0\}$ is the subset of all invertible elements of grade $1$. The equivalence of (\ref{2.2}) and (\ref{2.3}) is well-known and can be proved using the Cartan-Dieudonn\'e theorem \cite{Diedonne, Bulg} or the generalization of the Pauli's theorem \cite{DAN, Paulispin}. From (\ref{2.2}) and (\ref{2.3}), we can easily obtain
\begin{eqnarray}
\Gamma=\{T\in \Z^\times(\Cl^{\times (0)}\cup \Cl^{\times (1)}):\quad T \Cl^{1} T^{-1}\subseteq \Cl^{1}\}.\label{5.23}
\end{eqnarray}
In this paper, we present and study groups that leave invariant different other fixed subspaces (the subspaces of fixed grades, the subspaces defined by the reversion and the grade involution) of the Clifford algebra $\Cl$. In the case $n=1$, the Clifford algebra $\Cl$ is commutative, and all such groups coincide with $\Cl^\times$. We consider the case $n\geq 2$ below. We summarize the results for the Lie groups and Lie algebras presented in this paper in Table~\ref{table1}.

We use notations with multi-indices for direct sums of different subspaces:
$$\Cl^{(k)\overline{lm}r}:=\Cl^{(k)}\oplus \Cl^{\overline{l}} \oplus \Cl^{\overline{m}}\oplus \Cl^r$$
and similar ones. The group of elements that define inner automorphisms preserving the subspace $\Cl^{(k)\overline{lm}r}$ is denoted by $\Gamma^{(k)\overline{lm}r}$.

\begin{table}[ht]
\caption{Lie groups preserving fixed subspaces of $\Cl$ under similarity transformation and corresponding Lie algebras}
%{\begin{tabular}{lcccccc} \toprule
% & \multicolumn{2}{l}{Type} \\ \cmidrule{2-7}
% Class & One & Two & Three & Four & Five & Six \\ \midrule
% Alpha\textsuperscript{a} & A1 & A2 & A3 & A4 & A5 & A6 \\
% Beta & B2 & B2 & B3 & B4 & B5 & B6 \\
% Gamma & C2 & C2 & C3 & C4 & C5 & C6 \\ \bottomrule
%\end{tabular}}
\begin{tabular}{cccc} \hline
Lie group & $n$ & Lie algebra & dimension \\  \hline
 $\Cl^\times$& & $\Cl$ & $2^n$ \\ \hline
$\Gamma=\bigcap_{k=0}^n \Gamma^k$ & $1\mod 2$ & $\Cl^{02n}$ & $\frac{n(n-1)}{2}+2$ \\
 & $0\mod 2$ &$\Cl^{02}$ & $\frac{n(n-1)}{2}+1$\\ \hline
$\P=\Gamma^{(0)}=\Gamma^{(1)}$ & $1\mod 2$ &$\Cl^{(0)n}$ & $2^{n-1}+1$ \\
& $0\mod 2$ &$\Cl^{(0)}$ & $2^{n-1}$ \\ \hline
$\A=\Gamma^{\overline{01}}=\Gamma^{\overline{23}}$ & $1\mod 4$ & $\Cl^{0\overline{23}n}$  & $2^{n-1}-2^{\frac{n-1}{2}}\sin (\frac{\pi(n+1)}{4})+2$ \\
& $0, 2, 3\mod 4$ & $\Cl^{0\overline{23}}$ & $2^{n-1}-2^{\frac{n-1}{2}}\sin (\frac{\pi(n+1)}{4})+1$ \\ \hline
$\B=\Gamma^{\overline{03}}=\Gamma^{\overline{12}}$ & $3\mod 4$ & $\Cl^{0\overline{12}n}$ & $2^{n-1}-2^{\frac{n-1}{2}}\cos (\frac{\pi(n+1)}{4})+2$ \\
 &$0, 1, 2\mod 4$ & $\Cl^{0\overline{12}}$ & $2^{n-1}-2^{\frac{n-1}{2}}\cos (\frac{\pi(n+1)}{4})+1$ \\ \hline
$\Q=\Q^\prime=\Gamma^{\overline{k}}$ & $1, 3\mod 4$ & $\Cl^{0\overline{2}n}$ & $2^{n-2}-2^{\frac{n-2}{2}}\cos (\frac{\pi n}{4})+2$ \\
$(k=0, 1, 2, 3)$ & $2\mod 4$  & $\Cl^{0\overline{2}}$ & $2^{n-2}-2^{\frac{n-2}{2}}\cos (\frac{\pi n}{4})+1$ \\ \hline
$\Q=\Gamma^{\overline{1}}=\Gamma^{\overline{3}}$ & $0\mod 4$  & $\Cl^{0\overline{2}}$ & $2^{n-2}-2^{\frac{n-2}{2}}\cos (\frac{\pi n}{4})+1$ \\ \hline
$\Q^\prime=\Gamma^{\overline{0}}=\Gamma^{\overline{2}}$ & $0\mod 4$ & $\Cl^{0\overline{2}n}$ & $2^{n-2}-2^{\frac{n-2}{2}}\cos (\frac{\pi n}{4})+2$ \\ \hline
%$\Gamma^\prime$ & $0\mod 4$ & $\Cl^{02n}_$ & $\frac{n(n-1)}{2}+2$ \\ \hline
\end{tabular}
\label{table1}
\end{table}

%\begin{center}
%\begin{tabular}{ccccccc}
%  \hline
%  % after \\: \hline or \cline{col1-col2} \cline{col3-col4} ...
%\multicolumn{1}{|c}{}   & \multicolumn{1}{c|}{} &  & $\Q_{p,q}$ &  & \multicolumn{1}{|c}{} & \multicolumn{1}{c|}{} \\ \cline{4-4}
%\multicolumn{1}{|c}{$\A_{p,q}$}   & \multicolumn{1}{c|}{} & \multicolumn{1}{c|}{} & \multicolumn{1}{c|}{$\Gamma_{p,q}$} & \multicolumn{1}{c|}{} &  & \multicolumn{1}{c|}{$\B_{p,q}$} \\ \hline
%   & \multicolumn{1}{c|}{} &  & $\Q^\prime_{p,q}$ & \multicolumn{1}{c|}{} &  &  \\ \cline{3-5}
%   & \multicolumn{1}{c|}{} &  & $\P_{p,q}$ & \multicolumn{1}{c|}{} &  &  \\ \cline{3-5}
%
%\end{tabular}
%\end{center}

We can illustrate the considered Lie groups in the following way (see Figure \ref{figure1}). Note that we have $\Gamma\subseteq \Q\subseteq\Q^\prime\subseteq\P$ and $\Q=\A\cap\B=\A\cap\P=\B\cap\P$. All the considered groups are subgroups of the group $\Cl^\times$. Spin groups are subgroups of the smallest of the considered groups $\Gamma$.

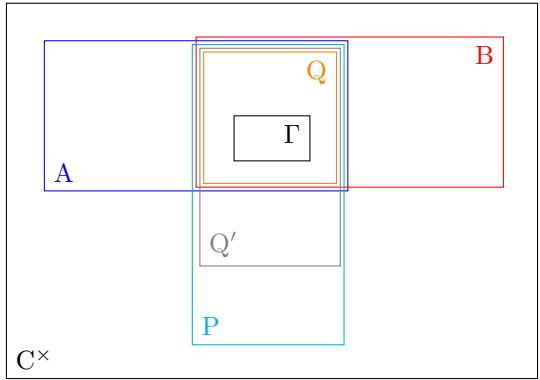
\begin{figure}[ht]
\centering
    \begin{tikzpicture}
    \draw[blue] (4, 2 ) rectangle (0,0) node[above right] {$\A$};
    \draw[red] (2.0,0.05) rectangle (6.05,2.05) node[below left] {$\B$};
    \draw[cyan] (3.95, 1.95) rectangle(1.95,-2.05) node[ above right]{$\P$};
    \draw[black] (2.5, 0.4) rectangle (3.5, 1.0) node[below left]{$\Gamma$};
    \draw[orange] (2.1, 0.1) rectangle (3.85, 1.85) node[below left]{$\Q$};
    \draw[black] (6.5, 2.5) rectangle (-0.5, -2.5) node[above right]{$\Cl^\times$};
    \draw[gray] (3.9, 1.9) rectangle (2.05, -1.0) node[above right]{$\Q^\prime$};
%    \draw[black] (4, 2 ) rectangle (0,0) node[above right] {$\A_{p,q}$};
%    \draw[black] (2.0,0.05) rectangle (6.05,2.05) node[below left] {$\B_{p,q}$};
%    \draw[black] (3.95, 1.95) rectangle(1.95,-2.05) node[ above right]{$\P_{p,q}$};
%    \draw[black] (2.5, 0.4) rectangle (3.5, 1.0) node[below left]{$\Gamma_{p,q}$};
%    \draw[black] (2.1, 0.1) rectangle (3.85, 1.85) node[below left]{$\Q_{p,q}$};
%    \draw[black] (6.5, 2.5) rectangle (-0.5, -2.5) node[above right]{$\cl^\times_{p,q}$};
%    \draw[black] (3.9, 1.9) rectangle (2.05, -1.0) node[above right]{$\Q^\prime_{p,q}$};
    \end{tikzpicture}
\caption{Lie groups preserving fixed subspaces of $\Cl$ under similarity transformation} \label{figure1}
\end{figure}

\section{The Lie groups $\Gamma^k$}\label{sectGamma}

Let us use the following notation for the groups that preserve subspaces of fixed grades under similarity transformation
\begin{eqnarray}
\Gamma^k:=\{T\in \Cl^\times:\quad T \Cl^{k}T^{-1}\subseteq \Cl^{k}\},\quad k=0, 1, \ldots, n.\label{gammak}
\end{eqnarray}
In the particular case, we get the Clifford group $\Gamma^1:=\Gamma$ (see (\ref{2.2})).

\begin{lem}\label{lemmagammak}
We have
$$\Gamma^0=\Cl^\times,\qquad
\Gamma^n=\left\lbrace
\begin{array}{ll}
\Cl^\times, & \mbox{if $n$ is odd,}\\
\Cl^{\times (0)}\cup \Cl^{\times (1)}, & \mbox{if $n$ is even.}
\end{array}\nonumber
\right.
$$
\end{lem}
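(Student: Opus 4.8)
The plan is to handle the two one-dimensional subspaces $\Cl^0$ and $\Cl^n$ separately by examining how an inner automorphism acts on a single spanning element. For $\Gamma^0$ this is immediate: $\Cl^0=\langle e\rangle$ and $TeT^{-1}=e\in\Cl^0$ for every $T\in\Cl^\times$, so $\Gamma^0=\Cl^\times$.

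For $\Gamma^n$, let $\omega:=e_{1\ldots n}$ be the pseudoscalar, so that $\Cl^n=\langle\omega\rangle$; since $Q$ is nondegenerate, $\omega^2$ is a nonzero scalar and $\omega\in\Cl^\times$. The key elementary fact I would record first is the commutation rule: each generator $e_a$ commutes with itself and anticommutes with the other $n-1$ generators, hence $e_a\omega=(-1)^{n-1}\omega e_a$, and iterating this over a product of $k$ generators, $U\omega=(-1)^{k(n-1)}\omega U$ for every $U\in\Cl^k$. If $n$ is odd, each exponent $k(n-1)$ is even, so $\omega$ is central (in agreement with (\ref{center})), giving $T\omega T^{-1}=\omega\in\Cl^n$ for all $T$, i.e. $\Gamma^n=\Cl^\times$. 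If $n$ is even, then $n-1$ is odd, so $\omega$ commutes with $\Cl^{(0)}$ and anticommutes with $\Cl^{(1)}$; writing $T=T_0+T_1$ with $T_j\in\Cl^{(j)}$ gives $T\omega=\omega\hat T$ and hence $T\omega T^{-1}=\omega\hat T T^{-1}$.

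It then remains to analyze when $\omega\hat T T^{-1}$ lies in $\Cl^n$. Since $\Cl^n=\langle\omega\rangle$ and $\omega$ is invertible, this holds iff $\hat T T^{-1}=\lambda e$ for some scalar $\lambda$, i.e. $\hat T=\lambda T$. Splitting into even and odd parts, $\hat T=\lambda T$ forces $(1-\lambda)T_0=0$ and $(1+\lambda)T_1=0$; as $T\neq 0$, this means either $T_1=0$ (with $\lambda=1$) or $T_0=0$ (with $\lambda=-1$), so $T\in\Cl^{\times (0)}\cup\Cl^{\times (1)}$. Conversely, every $T\in\Cl^{\times (0)}$ satisfies $T\omega T^{-1}=\omega$ and every $T\in\Cl^{\times (1)}$ satisfies $T\omega T^{-1}=-\omega$, both lying in $\Cl^n$, which gives the reverse inclusion and hence $\Gamma^n=\Cl^{\times (0)}\cup\Cl^{\times (1)}$ for even $n$. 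All steps are routine computations; the only point needing slight care is this last reduction, which relies on $\Cl^n$ being one-dimensional and on the invertibility of the pseudoscalar, so I do not anticipate a genuine obstacle in this warm-up lemma.
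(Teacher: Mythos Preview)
Your proof is correct and follows essentially the same approach as the paper: both arguments reduce to the identity $T\omega T^{-1}=\omega\hat T T^{-1}$ for even $n$ (the paper writes this as $e_{1\ldots n}(T_0-T_1)(T_0+T_1)^{-1}$), set this equal to $\lambda\omega$, and conclude $T_0=0$ or $T_1=0$ from $\hat T=\lambda T$. Your version is slightly more explicit in deriving the commutation rule $U\omega=(-1)^{k(n-1)}\omega U$ and in noting that the invertibility of $\omega$ is what lets you cancel it, but the substance is identical.
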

\begin{proof} We have $\Gamma^0=\Cl^\times$ in the case of arbitrary $n$ and $\Gamma^n=\Cl^\times$ in the case of odd $n$ because of the center (\ref{center}) of $\Cl$.

In the case of even $n$, we can easily verify that $\Cl^{\times (0)}\cup \Cl^{\times (1)}\subseteq \Gamma^n$ because $e_{1\ldots n}$ commutes with all even elements and anticommutes with all odd elements. Let us prove that $\Gamma^n\subseteq \Cl^{\times (0)}\cup \Cl^{\times (1)}$. Suppose that $T=T_0+T_1\in\Gamma^n$, where $T_0\in \Cl^{(0)}$ and $T_1\in \Cl^{(1)}$. We get
$$
(T_0+T_1)e_{1\ldots n}(T_0+T_1)^{-1}=e_{1\ldots n}(T_0-T_1)(T_0+T_1)^{-1}=\lambda e_{1\ldots n}
$$
for some constant $\lambda$, which is equivalent to $(T_0-T_1)=\lambda (T_0+T_1)$, i.e. $T_0=\lambda T_0$ and $-T_1=\lambda T_1$. We get $\lambda=1$, $T_1=0$ or $\lambda=-1$, $T_0=0$, i.e. $T\in \Cl^{\times (0)}\cup \Cl^{\times (1)}$.
\end{proof}

\begin{thm}  \label{th2} We have\footnote{According to the reviewer, this fact is well-known. We present this statement for the sake of completeness. The new results for the groups $\Gamma^k$ are presented in Section \ref{sectRelat}.}
$$\Gamma\subseteq \Gamma^k,\quad k=0, 1, \ldots, n.$$
As a consequence, we obtain
\begin{equation}
\Gamma=\bigcap_{k=0}^n \Gamma^k=\{T\in \Cl^\times:\quad T \Cl^{k}T^{-1}\subseteq \Cl^{k},\quad k=0, 1, \ldots, n\}.\nonumber
\end{equation}
\end{thm}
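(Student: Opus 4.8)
The plan is to show that the Clifford group $\Gamma$ is contained in every $\Gamma^k$, after which the displayed equality $\Gamma=\bigcap_{k=0}^n \Gamma^k$ is immediate: the inclusion $\Gamma\subseteq\bigcap_k\Gamma^k$ is the assertion just proved, and the reverse inclusion $\bigcap_k\Gamma^k\subseteq\Gamma^1=\Gamma$ is trivial since $1$ is one of the indices $k=0,1,\ldots,n$. So everything reduces to the single claim $\Gamma\subseteq\Gamma^k$ for all $k$.

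The natural approach is to use the generating description \eqref{2.3}: every $T\in\Gamma$ has the form $T=W v_1\cdots v_m$ with $W\in\Z^\times$ and $v_j\in\Cl^{\times 1}$. First I would observe that $W$, lying in the center $\Z$, commutes with everything, so $\ad_T=\ad_{v_1}\circ\cdots\circ\ad_{v_m}$; since a composition of subspace-preserving maps preserves that subspace, it suffices to prove $\ad_v(\Cl^k)\subseteq\Cl^k$ for a single invertible vector $v\in\Cl^{\times 1}$. For such a $v$ one has $v^{-1}=v/Q(v)$ (up to sign depending on conventions), so $\ad_v U = vUv^{-1} = vUv/Q(v)$, and the key computation is the reflection formula: for a basis blade $e_{a_1\cdots a_k}$ one checks how $v\,e_{a_1\cdots a_k}\,v$ decomposes. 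The cleanest way is the standard identity that for $v\in\Cl^1$ and $w\in\Cl^1$, $vwv = 2\la v,w\ra v - Q(v) w \in \Cl^1$, i.e. conjugation by $v$ maps $\Cl^1$ into $\Cl^1$ (this is essentially the content of \eqref{2.2}); then extend multiplicatively. Concretely, writing an arbitrary grade-$k$ element as a product structure is awkward, so instead I would argue: $\ad_v$ is an algebra automorphism of $\Cl$, it preserves $\Cl^1$ by the reflection identity, and $\Cl^k$ is spanned by products of $k$ elements of $\Cl^1$ (the subspace $\Cl^k$ is the degree-$k$ part, and $\ad_v(x_1\cdots x_k)=\ad_v(x_1)\cdots\ad_v(x_k)$ with each factor back in $\Cl^1$). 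One must be slightly careful: a product of $k$ vectors is not purely of grade $k$ in general, but the grade-$k$ component of $\Cl$ is spanned by those products of $k$ \emph{pairwise anticommuting} (orthogonal) vectors $e_{a_1},\ldots,e_{a_k}$, and $\ad_v$ being an automorphism sends anticommuting invertible vectors to anticommuting invertible vectors; hence $\ad_v(e_{a_1\cdots a_k})$ is again a product of $k$ pairwise orthogonal vectors, which lies in $\Cl^k$ (its grade is exactly $k$ because orthogonality forces the product to be a single blade up to scalar).

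The main obstacle is exactly this last point — controlling the grade of $\ad_v$ applied to a blade. Using the generators directly, $\ad_v(e_{a_1})\cdots\ad_v(e_{a_k})$ is a product of $k$ vectors that pairwise anticommute (since $e_{a_i}e_{a_j}=-e_{a_j}e_{a_i}$ is preserved by the automorphism $\ad_v$ and scalars are central), and a product of pairwise anticommuting invertible vectors in a Clifford algebra is homogeneous of grade equal to the number of factors; this is where the Clifford relations are really used. An alternative, perhaps slicker route avoiding blades entirely: note $\Gamma\subseteq\Gamma^0=\Cl^\times$ and $\Gamma\subseteq\Gamma^1=\Gamma$ trivially, and for $k\ge 2$ use that $\Cl^k = \Cl^1\wedge\cdots\wedge\Cl^1$ under the wedge product, which $\ad_v$ respects because $\ad_v$ is an automorphism compatible with the filtration and induced grading; but making "induced grading" precise again comes down to the anticommuting-product fact. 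Either way I expect this homogeneity lemma to be the only non-formal ingredient, and the rest — reducing to a single vector via centrality of $\Z^\times$ and multiplicativity of $\ad$ — to be routine.
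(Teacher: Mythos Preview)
Your argument is correct. After the shared reduction to a single invertible vector $v$ (using centrality of $\Z^\times$ and multiplicativity of $\ad$), you finish by noting that $\ad_v$ is an algebra automorphism sending $\Cl^1$ to itself, hence carries the pairwise-anticommuting generators $e_{a_1},\ldots,e_{a_k}$ to a pairwise-anticommuting family of invertible vectors, whose Clifford product is therefore a pure $k$-blade in $\Cl^k$. The one ``non-formal ingredient'' you flag --- that a product of $k$ pairwise-orthogonal invertible vectors lies in $\Cl^k$ --- is standard (such vectors are linearly independent, extend to an orthogonal basis, and the grading is basis-independent).

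The paper makes the same reduction to a single $v_j$ but closes the argument by a different mechanism: since $v_j^{-1}$ is a scalar multiple of $v_j$, conjugation by $v_j$ commutes with both the reversion and the grade involution, so $\ad_{v_j}$ preserves each subspace $\Cl^{\overline{k}}$ of \eqref{quat}; combined with the elementary filtration bound that the grade of $v_j U_k v_j^{-1}$ lies in $\{k-2,\ldots,k+2\}$, the congruence modulo $4$ forces the grade to be exactly $k$. Your route is more self-contained and closer to the classical ``reflections preserve orthogonality, hence preserve blades'' argument; the paper's route is shorter once the $\Cl^{\overline{k}}$ machinery is available and deliberately foreshadows the role those subspaces play in the later sections.
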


\begin{proof} Suppose $T=W v_1 \cdots v_m$, $W\in\Z^\times$, $v_j\in \Cl^{\times 1}$, $j=1, \ldots, m$. For $U_k\in \Cl^{k}$, we have
$$T U_k T^{-1}=W v_1 \ldots v_m U_k (W v_1 \ldots v_m)^{-1}=v_1 \ldots v_m U_k v_m^{-1} \ldots v_1^{-1}.$$
Since $(v_j)^2\in \Cl^{\times 0}$, we have $(v_j)^{-1}=\lambda_j v_j$ for some nonzero constant  $\lambda_j$, $j=1, \ldots, m$. Thus $\widetilde{(v_j)^{-1}}=v_j^{-1}$, $\widehat{(v_j)^{-1}}=-(v_j)^{-1}$, and we have
$$\widetilde{(v_j U_k v_j^{-1})}=v_j \widetilde{U_k} v_j^{-1},\qquad \widehat{(v_j U_k v_j^{-1})}=v_j \widehat{U_k} v_j^{-1}.$$
Using (\ref{quat}), we conclude that the elements $U_k$ and $v_j U_k v_j^{-1}$ are from the same subspace $\Cl^{\overline{k}}$. The grade of element $v_j U_k v_j^{-1}$ can take values between $k-2$ and $k+2$. Thus the grade equals $k$, and we get
$$v_j U_k v_j^{-1}\in \Cl^k,\qquad j=1, \ldots, m$$
and $T U_k T^{-1}\in \Cl^k$.
\end{proof}

We present new properties of the groups $\Gamma^k$, $k=1, \ldots, n$ in Section \ref{sectRelat} (see Lemmas \ref{lemmagammakq}, \ref{lemmagammaknk}, \ref{lemmagammakn}).

\section{The Lie groups $\P$}\label{sectP}

Let us use the following notation for the groups that preserve subspaces of fixed parity under similarity transformation
\begin{eqnarray}
\Gamma^{(k)}:&=&\{T\in \Cl^\times:\quad T \Cl^{(k)} T^{-1}\subseteq \Cl^{(k)}\},\quad k=0, 1.
\end{eqnarray}
Using Theorem \ref{th2}, we get
$$\Gamma\subseteq\Gamma^{(0)},\qquad \Gamma\subseteq\Gamma^{(1)}.$$
Let us consider the following group
\begin{eqnarray*}
\P:=\Z^\times(\Cl^{\times (0)}\cup \Cl^{\times (1)})=\left\lbrace
\begin{array}{ll}
\Cl^{\times (0)}\cup \Cl^{\times (1)}, & \mbox{if $n$ is even,}\\
\Cl^{\times 0n} \Cl^{\times (0)}, & \mbox{if $n$ is odd.}
\end{array}
\right.
\end{eqnarray*}
%&=&\{ W U_1\cdots U_m,\quad m\leq 2,\quad W\in \Z^\times_{p,q},\quad U_j\in\cl^{\times (1)}_{p,q}\}.\label{1.4}
%Note that (\ref{1.4}) (compare with (\ref{2.3})) can be regarded as the analogue of the Cartan-Dieudonn\'e theorem.
By Lemma \ref{lemmagammak}, we have $\Gamma^n=\P$ for even $n$.

We need the following lemma to prove Theorems \ref{th1} and \ref{th3}.

\begin{lem} \label{lemmacenter2} We have
\begin{eqnarray}
\{U\in \Cl:\quad [U, V]=0,\quad \forall V\in \Cl^{(0)}\}=\Cl^0\oplus \Cl^n.\label{center2}
\end{eqnarray}
\end{lem}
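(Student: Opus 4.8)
The plan is to compute the centralizer of the even subalgebra $\Cl^{(0)}$ directly on the standard basis $\{e_{a_1\ldots a_k}\}$, using the fact that $\Cl^{(0)}$ is generated (as a subspace, for commutation purposes) by the bivectors $e_{ab}$ with $a<b$. Concretely, an element $U$ commutes with all of $\Cl^{(0)}$ if and only if it commutes with every $e_{ab}$, $1\le a<b\le n$, since these span a generating set. So I would first expand $U=\sum_A U_A e_A$ over all multi-indices $A\subseteq\{1,\ldots,n\}$ and determine, for a basis element $e_A$, when $e_A e_{ab}=e_{ab}e_A$.

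The key computational step is the standard commutation rule: for a multi-index $A$ and a pair $\{a,b\}$, the sign picked up when moving $e_{ab}$ past $e_A$ depends only on the parities $|A\cap\{a,b\}|$ and on $|A|$. Writing it out, $e_A$ commutes with $e_{ab}$ precisely when $|A|\,|\{a,b\}| - |A\cap\{a,b\}|$ is even, i.e. when $|A\cap\{a,b\}|\equiv |A|\pmod 2$ wait — more carefully: $e_{ab}e_A=(-1)^{2|A|-2|A\cap\{a,b\}|}\cdots$; the upshot is that commutation with $e_{ab}$ holds iff $|A\cap\{a,b\}|$ has a prescribed parity depending on $|A|\bmod 2$. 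I would then argue: if $U$ lies in the centralizer, each homogeneous multigrade component $U^{(k)}$ does too (since $\Cl^{(0)}$ acts by commutators preserving... actually commutation with bivectors does not preserve grade in general — but the condition "$[e_A,e_{ab}]=0$ for all $a,b$" is a condition on $e_A$ alone, so the centralizer is spanned by those basis elements $e_A$ satisfying it). So it suffices to find all $A$ with $e_A\in$ centralizer.

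For such an $A$: taking $\{a,b\}$ with $\{a,b\}\subseteq A$ gives $|A\cap\{a,b\}|=2$; taking $\{a,b\}$ disjoint from $A$ gives $0$; taking $\{a,b\}$ with exactly one element in $A$ gives $1$. If $1\le |A|\le n-1$ and $A$ is neither empty nor full, one can choose $a\in A$, $b\notin A$, producing the "mixed" case; comparing it against a "pure" case forces a contradiction unless $|A|\in\{0,n\}$. The cases $A=\varnothing$ (giving $\Cl^0$) and $A=\{1,\ldots,n\}$ (giving $\Cl^n$) are then checked to commute with every $e_{ab}$: $e_{1\ldots n}$ commutes with all even elements by the parity count. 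Hence the centralizer is exactly $\Cl^0\oplus\Cl^n$, and the reverse inclusion is the easy direction. I expect the main obstacle to be bookkeeping the commutation signs cleanly so that the "mixed index forces $|A|\in\{0,n\}$" step is transparent; once the sign rule is stated correctly, the argument is a short finite case analysis. Note $\Cl^n\subseteq\Cl^{(0)}$ or $\Cl^{(1)}$ according to the parity of $n$, but in either case $e_{1\ldots n}$ centralizes $\Cl^{(0)}$, so the statement is uniform in $n$.
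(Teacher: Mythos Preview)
Your approach is correct and is essentially the same as the paper's: both reduce the centralizer computation to commutation with the bivectors $e_{ab}$ and then check which basis monomials $e_A$ survive, the paper via the concrete decomposition $U=A+e_1B+e_2C+e_{12}D$ and you via the general sign rule. For the bookkeeping you were hesitant about, the clean statement is $e_A e_B=(-1)^{|A|\,|B|-|A\cap B|}e_B e_A$, so with $|B|=2$ commutation holds iff $|A\cap\{a,b\}|$ is even, independently of $|A|$; then any $A$ with $\varnothing\subsetneq A\subsetneq\{1,\dots,n\}$ admits a mixed pair $a\in A$, $b\notin A$, forcing $A\in\{\varnothing,\{1,\dots,n\}\}$ exactly as you outlined.
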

\begin{proof} Let us have $U e_{ab}=e_{ab} U$, $\forall a<b$. Taking $a=1$ and $b=2$, let us represent $U$ in the form $$U=A+e_1B+e_2 C+e_{12}D,$$ where the elements $A, B, C, D\in \Cl$ do not contain $e_1$ and $e_2$. We get $(A+e_1B+e_2 C+e_{12}D)e_{12}=e_{12}(A+e_1B+e_2 C+e_{12}D)$. Using
$A e_{12}=e_{12} A$, $D e_{12}=e_{12}D$, $e_1 B e_{12}=-e_{12}e_1B$, and $e_2C e_{12}=-e_{12}e_2C$,
we obtain $B=C=0$. Acting similarly for the other $a<b$, we obtain
$U=ae+b e_{1\ldots n}$ for some constants $a, b$.
\end{proof}

\begin{thm}\label{th1} The following three groups coincide\footnote{As one of the anonymous reviewers of this paper noted, this statement can be reformulated for the more general case of the graded central simple algebras (GCSAs).}
$$\P=\Gamma^{(0)}=\Gamma^{(1)}.$$
\end{thm}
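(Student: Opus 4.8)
The plan is to prove the chain of inclusions $\P\subseteq\Gamma^{(0)}$, then $\Gamma^{(0)}\subseteq\Gamma^{(1)}$ (and symmetrically $\Gamma^{(1)}\subseteq\Gamma^{(0)}$), and finally $\Gamma^{(0)}\subseteq\P$, so that all three groups coincide. For the first inclusion I would take $T\in\P$, so $T=WS$ with $W\in\Z^\times$ and $S$ either in $\Cl^{\times(0)}$ or in $\Cl^{\times(1)}$. Since $W$ is central it contributes nothing to the conjugation, so it suffices to check that a homogeneous invertible element $S$ (even or odd) satisfies $S\Cl^{(0)}S^{-1}\subseteq\Cl^{(0)}$. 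This is immediate from the fact that $\Cl^{(0)}$ is a subalgebra and that $\widehat{\,\cdot\,}$ is an automorphism: if $\hat S=\pm S$ then $\hat{S}^{-1}=\pm S^{-1}$, hence for $U\in\Cl^{(0)}$ we have $\widehat{SUS^{-1}}=\hat S\,\hat U\,\hat S^{-1}=\hat S U\hat S^{-1}=SUS^{-1}$, so $SUS^{-1}\in\Cl^{(0)}$. The same computation with $\hat S=-S$ still gives $\widehat{SUS^{-1}}=SUS^{-1}$, which covers the odd case as well.

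The equality $\Gamma^{(0)}=\Gamma^{(1)}$ should follow from the observation that $\Cl^{(1)}=e_1\Cl^{(0)}$ together with the fact (to be established) that every $T\in\Gamma^{(0)}$ is itself $\Z^\times$-times-homogeneous, so that $\ad_T$ commutes with left multiplication by $e_1$ up to a sign; more directly, once we know $\Gamma^{(0)}\subseteq\P$ and $\P=\Gamma^{(1)}$ by the symmetric version of the first inclusion, the two groups must agree. So the crux is the reverse inclusion $\Gamma^{(0)}\subseteq\P$, and this is where Lemma \ref{lemmacenter2} enters. Given $T\in\Gamma^{(0)}$, write $T=T_0+T_1$ with $T_i\in\Cl^{(i)}$. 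Conjugation by $T$ preserves $\Cl^{(0)}$; I want to show $T_0=0$ or $T_1=0$ modulo the center. Applying the grade involution to $TUT^{-1}\in\Cl^{(0)}$ for $U\in\Cl^{(0)}$ gives $\hat T U\hat T^{-1}=TUT^{-1}$, i.e. $T^{-1}\hat T$ commutes with every $U\in\Cl^{(0)}$. By Lemma \ref{lemmacenter2}, $T^{-1}\hat T\in\Cl^0\oplus\Cl^n=\Z$ (here using that $T^{-1}\hat T$ is visibly invertible), so $\hat T=TZ$ for some $Z\in\Z^\times$. Writing this in components, $T_0-T_1=(T_0+T_1)Z$; splitting $Z=z_0+z_n$ into its grade-$0$ and grade-$n$ parts and using that $z_n$ swaps parity for odd $n$ and preserves it for even $n$, one solves for $T_0,T_1$ and finds that $T$ is a central multiple of a homogeneous element — exactly membership in $\P$.

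The main obstacle I expect is the bookkeeping in that last step when $n$ is odd: then $\Z=\Cl^0\oplus\Cl^n$ is two-dimensional and $e_{1\ldots n}$ is odd, so the relation $\hat T=TZ$ mixes the parity components of $T$ with those of $Z$ in a way that needs a small case analysis (comparing the $\Cl^{(0)}$ and $\Cl^{(1)}$ parts of both sides) to conclude that $T\in\Z^\times(\Cl^{\times(0)}\cup\Cl^{\times(1)})$. For even $n$ the argument is cleaner because $\Z=\Cl^0$ and one recovers the situation already handled in the proof of Lemma \ref{lemmagammak}. I would also need to be slightly careful that $T^{-1}\hat T$ is genuinely invertible before invoking Lemma \ref{lemmacenter2}, but this is clear since both $T$ and $\hat T=\widehat{\vphantom{T}}(T)$ lie in $\Cl^\times$.
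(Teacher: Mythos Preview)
Your overall strategy matches the paper's, but there is a genuine gap in the key step $\Gamma^{(0)}\subseteq\P$ for even $n$. You write ``By Lemma \ref{lemmacenter2}, $T^{-1}\hat T\in\Cl^0\oplus\Cl^n=\Z$'' and then later ``For even $n$ the argument is cleaner because $\Z=\Cl^0$.'' These two claims are inconsistent, and the second one is where the error bites. Lemma \ref{lemmacenter2} tells you only that the centralizer of $\Cl^{(0)}$ is $\Cl^0\oplus\Cl^n$; it does \emph{not} say this centralizer equals the center $\Z$. For odd $n$ the two happen to coincide, so your odd-$n$ sketch is fine. For even $n$, however, $\Z=\Cl^0$ while the centralizer of $\Cl^{(0)}$ is the strictly larger $\Cl^0\oplus\Cl^n$, so the element $Z=T^{-1}\hat T$ may have a nonzero grade-$n$ part and is \emph{not} a scalar. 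You therefore cannot invoke the argument of Lemma \ref{lemmagammak}; the even case is the harder one, not the cleaner one.

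Concretely, with $Z=z_0e+z_ne_{1\ldots n}$ (both pieces even when $n$ is even) the relation $\hat T=TZ$ yields $T_0(e-Z)=0$ and $T_1(e+Z)=0$. If either $e-Z$ or $e+Z$ is invertible you are done, but you must still exclude the possibility that \emph{both} are non-invertible. The paper handles this by computing $(e\pm Z)(e\mp Z)$ explicitly, showing that simultaneous non-invertibility forces $Z=\pm e_{1\ldots n}$ (or $\pm i e_{1\ldots n}$ over $\C$), and then deriving a contradiction from $\hat T=TZ$ by observing $T=TZ$ with $Z\neq e$. Your proposal omits this step entirely. As a secondary remark, your route to $\Gamma^{(0)}=\Gamma^{(1)}$ via ``$\P=\Gamma^{(1)}$ by the symmetric version of the first inclusion'' is not quite right either: the symmetric version of $\P\subseteq\Gamma^{(0)}$ is only $\P\subseteq\Gamma^{(1)}$, so you still need a separate argument for $\Gamma^{(1)}\subseteq\P$ (which, incidentally, is easier than $\Gamma^{(0)}\subseteq\P$ because $\widehat{T^{-1}}T$ then commutes with all of $\Cl^{(1)}$, hence with every generator $e_a$, hence lies in the genuine center $\Z$ without appeal to Lemma \ref{lemmacenter2}).
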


\begin{proof}
Let us prove that $\P \subseteq \Gamma^{(0)}$ and $\P \subseteq \Gamma^{(1)}$. Suppose that $T\in \Z^\times(\Cl^{\times (0)}\cup \Cl^{\times (1)})$. Since $TT^{-1}=e$, we conclude that if $T=W T_0\in\Z^\times\Cl^{\times (0)}$, $W\in\Z^\times$, $T_0\in\Cl^{\times (0)}$, then $T^{-1}=W^{-1} T_0^{-1}\in\Z^\times\Cl^{\times (0)}$, and if $T=W T_1\in\Z^\times\Cl^{\times (1)}$, $W\in\Z^\times$, $T_1\in\Cl^{\times (1)}$, then $T^{-1}=W^{-1} T_1^{-1}\in\Z^\times\Cl^{\times (1)}$. Therefore $T \Cl^{(1)}T^{-1}\subseteq \Cl^{(1)}$ and $T \Cl^{(0)} T^{-1}\subseteq \Cl^{(0)}$.

Let us prove that $\Gamma^{(1)}\subseteq \P$. Suppose $T \Cl^{(1)} T^{-1}\subseteq \Cl^{(1)}$. Then we have
$$-T \Cl^{(1)} T^{-1}=\widehat{(T \Cl^{(1)} T^{-1})}=-\hat{T} \Cl^{(1)} \widehat{T^{-1}}.$$
Multiplying both sides on the right by $T$ and on the left by $\widehat{T^{-1}}$, we get
$$\widehat{T^{-1}} T \Cl^{(1)}=\Cl^{(1)} \widehat{T^{-1}}T,$$
thus $\widehat{T^{-1}}T\in\Z^\times$, and $\hat{T}=W T$ for some $W\in\Z^\times$. Taking grade involution, we obtain $T=\hat{W} \hat{T}$. Thus $T=\hat{W} W T$. If $n$ is even, then for $W=\lambda e$ with some constant $\lambda$, we get $\lambda=\pm 1$. We obtain $T\in \Cl^{\times (0)}\cup \Cl^{\times (1)}$. If $n$ is odd, then $W=ae+be_{1\ldots n}$ for some constants $a, b$. Suppose we have $T=T_0+T_1$, $T_0\in \Cl^{(0)}$, $T_1\in \Cl^{(1)}$. Then $T_0-T_1=(ae+be_{1\ldots n})(T_0+T_1)$, and $T_0=aT_0+be_{1\ldots n}T_1$. If $b=0$, then $a=1$, $W=e$, and $T\in \Cl^{(0)}\cup \Cl^{(1)}$. If $b\neq 0$, then $T_1=\mu e_{1 \ldots n} T_0$ for some constant $\mu$. Thus we have $T=T_0+T_1= (e+\mu e_{1\ldots n})T_0\in \Z^\times \Cl^{\times (0)}= \Z^\times (\Cl^{\times (0)}\cup \Cl^{\times (1)})$.

Let us prove that $\Gamma^{(0)}\subseteq\P$. Suppose $T \Cl^{(0)}T^{-1}\subseteq \Cl^{(0)}$. Then we have
$$T \Cl^{(0)}T^{-1}=\widehat{(T \Cl^{(0)}T^{-1})}=\hat{T} \Cl^{(0)} \widehat{T^{-1}}.$$
Multiplying both sides by $T$ on the right and by $\widehat{T^{-1}}$ on the left, we get
$$\widehat{T^{-1}} T \Cl^{(0)}=\Cl^{(0)} \widehat{T^{-1}}T.$$
Using Lemma \ref{lemmacenter2}, we obtain $\widehat{T^{-1}}T\in(\Cl^{0}\oplus \Cl^{n})^\times$, and $\hat{T}=W T$ for some $W\in \Cl^{\times 0 \, n}$. In the case of odd $n$, the proof is analogous to the proof of the previous case because $\Z=\Cl^{0\, n}$ in this case. Let us consider the case of even $n$. Suppose we have $T=T_0+T_1$, $T_0\in \Cl^{(0)}$, $T_1\in \Cl^{(1)}$. Then $T_0-T_1= W(T_0+T_1)$, i.e. $T_0=W T_0$ and $-T_1=WT_1$. We get $(W-e)T_0=0$ and $(W+e)T_1=0$. If at least one of two elements $W+e$ and $W-e$ is invertible, then we conclude that $T_0=0$ or $T_1=0$, and $T\in \Cl^{(0)}\cup \Cl^{(1)}$. Suppose that both elements $W+e$ and $W-e$ are non invertible. For $W=ae+be_{1\ldots n}$ with some constants $a, b$, we get $W\pm e=(a\pm 1)e+b e_{1\ldots n}$. We obtain
$$((a\pm 1)e+be_{1\ldots n})((a\pm 1)e-be_{1\ldots n})=(a\pm 1)^2 e-b^2 (e_{1\ldots n})^2 e.$$
We see that both elements can be non invertible only in the case $a=0$, $b=\pm 1$, $(e_{1\ldots n})^2=e$ or $a=0$, $b=\pm i$, $(e_{1\ldots n})^2=-e$ (in the case of the field $\C$). We get $W=\pm e_{1\ldots n}$ or $W=\pm i e_{1\ldots n}$. From $\hat{T}=WT$, we obtain
$$T=W\hat{T}=W\widehat{(T_0+T_1)}=W(T_0-T_1)=W T_0-W T_1=T_0 W+T_1 W=T W,$$
i.e. a contradiction, because $T$ is invertible and $W\neq e$.
%The condition (\ref{1.3}) follows from (\ref{1.4}) trivially. The condition (\ref{1.4}) follows from (\ref{1.3}), because we can always represent an arbitrary even element $V\in\cl^{(0)}_{p,q}$ as the product of two odd elements $U_1=e_1\in\cl^{\times(1)}_{p,q}$ and $U_2\in\cl^{(1)}_{p,q}$.
%%\begin{eqnarray}
%%&&\cl^{(0)}_{p,q}\ni V=ve+v_{12}e_{12}+v_{13}e_{13}+\cdots+v_{23}e_{23}+\cdots\nonumber\\
%%&&=e_1 (\eta_{11}v e_1+v_{12}e_2+v_{13}e_3+\cdots+\eta_{11}v_{23}e_{123}+\cdots)=U_1 U_2.\nonumber
%%\end{eqnarray}
%If $V$ is invertible, then $U_2$ is also invertible.
\end{proof}

Note that we can prove $\Gamma^{(0)}=\Gamma^{(1)}$ in the case of odd $n$ in a simpler way. Multiplying both sides of $T \Cl^{(0)} T^{-1}\subseteq \Cl^{(0)}$ by $e_{1\ldots n}\in\Z$ and using $\Cl^{(0)}=e_{1\ldots n}\Cl^{(1)}$, we obtain $T \Cl^{(1)}T^{-1}\subseteq \Cl^{(1)}$. Similarly, in the opposite direction.

Note that we can also prove $\Gamma^{(1)}\subseteq \P$ in another way using the generalization of the Pauli's theorem (see \cite{DAN, Spin, genR}). Using $T \Cl^{(1)}T^{-1}\subseteq \Cl^{(1)}$, we get
\begin{eqnarray}
\beta_a:=Te_aT^{-1}\in \Cl^{(1)},\qquad a=1, \ldots, n.\label{beta}
\end{eqnarray}
The elements (\ref{beta}) satisfy $\beta_a \beta_b+\beta_b \beta_a=2\eta_{ab}e$. By the generalized Pauli's theorem for odd elements (see Theorems 5.1 and 5.2 in \cite{Spin}), we get $T\in \Z( \Cl^{\times (0)}\cup \Cl^{\times (1)})$ because the element $T$ equals
$$
\sum_{a_1\leq\cdots\leq a_k} \beta_{a_1\ldots a_k} F (e_{a_1\ldots a_k})^{-1}
$$
for some element $F\in \{e_{a_1\ldots a_k}\}$ up to an invertible element of the center $\Z^\times$.

%Note that, in the case of even $n$, we can also prove $\Gamma^{(0)}_{p,q}\subseteq \P_{p,q}$ in the other way. We have $T\cl^{(0)}_{p,q}T^{-1}\subseteq\cl^{(0)}_{p,q}$. Assume the converse: suppose $T=T_0+T_1$ for some nonzero elements $T_0\in\cl^{(0)}_{p,q}$, $T_1\in\cl^{(1)}_{p,q}$. For an arbitrary $U_0\in\cl^{(0)}_{p,q}$, we have
%\begin{eqnarray}
%TU_0T^{-1}=(T_0+T_1)U_0T^{-1}\in\cl^{(0)}_{p,q}.\label{R1}
%\end{eqnarray}
%Since $e_{1\ldots n}$ and $e_{1\ldots n}U_0$ are even, it follows that
%$$Te_{1\ldots n}U_0 T^{-1}=e_{1\ldots n}(T_0-T_1)U_0 T^{-1}\in\cl^{(0)}_{p,q},$$
%and
%\begin{eqnarray}
%(T_0-T_1)U_0 T^{-1}\in\cl^{(0)}_{p,q}.\label{R2}
%\end{eqnarray}
%Subtracting (\ref{R2}) from (\ref{R1}), and summing (\ref{R2}) and (\ref{R1}), we get
%$$T_1U_0T^{-1}\in\cl^{(0)}_{p,q},\qquad T_0 U_0T^{-1}\in\cl^{(0)}_{p,q},$$
%i.e. $T^{-1}\in\cl^{(1)}_{p,q}$ and $T^{-1}\in\cl^{(0)}_{p,q}$ at the same time. We obtain a contradiction, and $T\in \cl^{\times (0)}_{p,q}\cup\cl^{\times (1)}_{p,q}$.

Note that subspaces of fixed parity are direct sums of the subspaces (\ref{quat}):
$\Cl^{(0)}=\Cl^{\overline{02}}$ and $\Cl^{(1)}=\Cl^{\overline{13}}$. Below we study other subspaces, which are also direct sums of the subspaces (\ref{quat}): $\Cl^{\overline{01}}$, $\Cl^{\overline{23}}$, $\Cl^{\overline{03}}$, and $\Cl^{\overline{12}}$.

\section{The Lie groups $\A$}\label{sectA}

Let us use the following notation for the groups that preserve subspaces (\ref{quat}) or their direct sums under similarity transformation
\begin{eqnarray}
\Gamma^{\overline{k}}:&=&\{T\in \Cl^\times:\quad T \Cl^{\overline{k}} T^{-1}\subseteq \Cl^{\overline{k}}\},\quad k=0, 1, 2, 3,\\
\Gamma^{\overline{kl}}:&=&\{T\in \Cl^\times:\quad T \Cl^{\overline{kl}}T^{-1}\subseteq \Cl^{\overline{kl}}\},\quad k, l=0, 1, 2, 3.
\end{eqnarray}
Consider the following group\footnote{As one of the anonymous reviewers of this paper noted, the groups $\A, \B, \Q$ can be defined in the more general case of the graded central simple algebras with involution (GCSAsWI) and the corresponding statements can be reformulated for this more general case.}
\begin{eqnarray}
\A:=\{T\in \Cl^\times:\quad \tilde{T} T\in \Z^{\times}\}\label{3.3t}.
%???&=&\{W V_1 \cdots V_m,\quad m\leq ???,\quad W\in \Z^\times_{p,q},\quad  V_j\in\cl^{\overline{01}}_{p,q}\}.\nonumber
\end{eqnarray}
Note that the ``norm function'' $\psi(T):=\tilde{T}T$ is widely used in the theory of spin groups (see, for example \cite{BT, ABS, Lounesto, Bulg}). The spin groups
$$\Pin(p,q):=\{T\in\Gamma^\pm:\quad \psi(T)=\pm e\},\qquad \Spin(p,q):=\Pin(p,q)\cap \Cl^{(0)}$$
are defined as normalized subgroups of the Lipschitz group
\begin{eqnarray}
\Gamma^\pm:=\{T\in \Cl^{\times (0)}\cup \Cl^{\times (1)}:\quad T \Cl^{1}T^{-1}\subseteq \Cl^{1}\}\subseteq \Gamma.
\end{eqnarray}
It is well-known that $\psi: \Gamma^\pm\to \Cl^{\times 0}$. The group $\A$ contains the groups $\Gamma$ and $\Gamma^\pm$ as subgroups.

\begin{lem} The norm function $\tilde{T}T$ takes values in $\Cl^{\overline{01}}$, i.e.
$$\tilde{T}T\in \Cl^{\overline{01}}\qquad \forall T\in \Cl.$$
As a consequence, we get
\begin{eqnarray}
\A=\left\lbrace
\begin{array}{ll}
\{T\in \Cl^\times:\quad \tilde{T} T\in \Cl^{\times 0}\}, & \mbox{if $n=0, 2, 3 \mod 4$,}\\
\{T\in \Cl^\times:\quad \tilde{T} T\in (\Cl^{0}\oplus \Cl^{n})^\times\}, & \mbox{if $n=1\mod 4$.}
\end{array}
\right.\label{tt4}
\end{eqnarray}
%In the particular case, we have
%$$\A_{p,q}=\cl^\times_{p,q},\qquad n=1.$$
\end{lem}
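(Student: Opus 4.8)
The plan is to compute the image of the norm function $\psi(T) = \tilde{T}T$ grade by grade modulo $4$, exploiting the behavior of the reversion and the grade involution on the subspaces $\Cl^{\overline{m}}$ given by (\ref{quat}). First I would write an arbitrary $T\in\Cl$ as $T = U_{\overline{0}} + U_{\overline{1}} + U_{\overline{2}} + U_{\overline{3}}$ with $U_{\overline{m}}\in\Cl^{\overline{m}}$, and recall that on $\Cl^{\overline{m}}$ the reversion acts as multiplication by $(-1)^{m(m-1)/2}$, i.e. by $+1$ on $\Cl^{\overline{0}}$ and $\Cl^{\overline{1}}$ and by $-1$ on $\Cl^{\overline{2}}$ and $\Cl^{\overline{3}}$, while the grade involution acts by $(-1)^m$. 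The key observation is that $\widetilde{\tilde{T}T} = \tilde{T}\,\widetilde{\tilde{T}} = \tilde{T}T$, so $\psi(T)$ is always reversion-invariant; by (\ref{quat}) the reversion-invariant subspace is exactly $\Cl^{\overline{0}}\oplus\Cl^{\overline{1}} = \Cl^{\overline{01}}$. That already gives $\tilde{T}T\in\Cl^{\overline{01}}$ for all $T$, which is the first assertion.

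For the consequence, I would intersect $\Cl^{\overline{01}}$ with the center $\Z$ from (\ref{center}). When $n\equiv 0,2,3\pmod 4$: if $n$ is even the center is $\Cl^0$, which lies in $\Cl^{\overline{0}}\subseteq\Cl^{\overline{01}}$, so $\Z^\times\cap\Cl^{\overline{01}} = \Cl^{\times 0}$; if $n$ is odd (so $n\equiv 3$) the center is $\Cl^0\oplus\Cl^n$, and since $n\equiv 3\pmod 4$ we have $\Cl^n\subseteq\Cl^{\overline{3}}$, which is \emph{not} contained in $\Cl^{\overline{01}}$, hence again $\Z^\times\cap\Cl^{\overline{01}} = \Cl^{\times 0}$. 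In all three cases $\tilde{T}T\in\Z^\times$ is equivalent to $\tilde{T}T\in\Cl^{\times 0}$. When $n\equiv 1\pmod 4$: the center is $\Cl^0\oplus\Cl^n$ with $\Cl^n\subseteq\Cl^{\overline{1}}\subseteq\Cl^{\overline{01}}$, so the whole center sits inside $\Cl^{\overline{01}}$, and the condition $\tilde{T}T\in\Z^\times$ is just $\tilde{T}T\in(\Cl^0\oplus\Cl^n)^\times$. Substituting these descriptions into the definition (\ref{3.3t}) of $\A$ yields (\ref{tt4}).

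The only genuine subtlety — and the step I would be most careful with — is making sure the reversion-invariance argument is watertight: one must use that reversion is an anti-automorphism, so $\widetilde{AB} = \tilde{B}\tilde{A}$, and that it is an involution, $\widetilde{\tilde{A}} = A$; then $\widetilde{\tilde{T}T} = \tilde{T}\widetilde{\tilde{T}} = \tilde{T}T$ follows immediately, and one reads off from the second line of (\ref{quat}) that $\{U\in\Cl:\tilde{U}=U\} = \Cl^{\overline{0}}\oplus\Cl^{\overline{1}}$. Everything else is bookkeeping with the residue of $n$ modulo $4$ and the location of $\Cl^n$ among the four subspaces $\Cl^{\overline{m}}$. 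No hard estimates or structural theorems are needed here; the lemma is essentially a symmetry computation together with an elementary intersection with the center.
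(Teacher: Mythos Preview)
Your proposal is correct and follows essentially the same approach as the paper: both rely on the single identity $\widetilde{\tilde{T}T}=\tilde{T}\,\widetilde{\tilde{T}}=\tilde{T}T$ to conclude reversion-invariance, then invoke (\ref{quat}) to identify the reversion-fixed subspace as $\Cl^{\overline{01}}$, and finish by reading off (\ref{tt4}) from the definition (\ref{3.3t}). Your initial decomposition $T=U_{\overline{0}}+U_{\overline{1}}+U_{\overline{2}}+U_{\overline{3}}$ is harmless but unnecessary (you never use it), and your explicit case analysis on $n\bmod 4$ for the intersection $\Z\cap\Cl^{\overline{01}}$ is more detailed than the paper, which simply asserts the consequence; otherwise the arguments coincide.
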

\begin{proof}
We have $\widetilde{\tilde{T}T}=\tilde{T} \tilde{\tilde{T}}=\tilde{T}T$ for any $T\in \Cl$. We conclude that the reversion does not change the expression $\tilde{T}T$. Using (\ref{quat}), we obtain $\tilde{T}T\in \Cl^{\overline{01}}$. Using (\ref{3.3t}), we get (\ref{tt4}).
\end{proof}
\begin{thm}\label{th3t} The following three groups coincide
$$\A=\Gamma^{\overline{01}}=\Gamma^{\overline{23}}.$$
\end{thm}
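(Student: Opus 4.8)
The plan is to establish the two inclusions $\A\subseteq\Gamma^{\overline{01}}$ and $\Gamma^{\overline{01}}\subseteq\A$, and then handle $\Gamma^{\overline{23}}$ by a twisting argument exploiting that $\Cl^{\overline{23}}$ is the complementary summand and that multiplication by a fixed element of $\Cl^{\overline{2}}\cap\Z$-type behaviour interchanges the two. First I would prove $\A\subseteq\Gamma^{\overline{01}}$: take $T\in\A$, so $\tilde T T=:W\in\Z^\times$, hence also $T\tilde T\in\Z^\times$ (apply reversion to $W$, or note $T\tilde T=T(\tilde T T)T^{-1}=TWT^{-1}=W$ since $W$ is central). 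For $U\in\Cl^{\overline{01}}$ we have $\tilde U=U$ (from (\ref{quat}), since $\frac{m(m-1)}{2}$ is even for $m=0,1$), so $\widetilde{TUT^{-1}}=\widetilde{T^{-1}}\,U\,\tilde T=\widetilde{T^{-1}}\tilde T\cdot\tilde T^{-1}U\tilde T$; using $\tilde T=WT^{-1}$ and centrality of $W$ one checks $\widetilde{TUT^{-1}}=TUT^{-1}$, so $TUT^{-1}\in\Cl^{\overline{01}}$ by the characterisation (\ref{quat}) — one must also verify the grade-involution condition is automatic once reversion-invariance holds, which follows because $\Cl^{\overline{01}}=\{U:\tilde U=U\}$ is cut out by the reversion alone (the grade involution constraints for $m=0$ and $m=1$ together just say nothing beyond $\tilde U=U$).

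Next, the converse $\Gamma^{\overline{01}}\subseteq\A$. Suppose $T\Cl^{\overline{01}}T^{-1}\subseteq\Cl^{\overline{01}}$. Since every element of $\Cl^{\overline{01}}$ is fixed by reversion, applying reversion to $TUT^{-1}\in\Cl^{\overline{01}}$ gives $\widetilde{T^{-1}}\,U\,\tilde T=TUT^{-1}$ for all $U\in\Cl^{\overline{01}}$, i.e. $\tilde T T\cdot U=U\cdot\tilde T T$ after multiplying by $T$ on the left and $\tilde T$ on the right (rearranging carefully). Thus $\tilde T T$ commutes with all of $\Cl^{\overline{01}}$. The key algebraic input I need is that the commutant of $\Cl^{\overline{01}}$ in $\Cl$ is small — I would expect it to be $\Z$ (or $\Cl^0\oplus\Cl^n$), proved by the same direct computation as Lemma \ref{lemmacenter2}: write a putative central-for-$\Cl^{\overline{01}}$ element in terms of a pair of generators $e_1,e_2$ (note $e_1,e_2\in\Cl^1\subseteq\Cl^{\overline{01}}$ and $e_{12}\notin\Cl^{\overline{01}}$ but $e_1e_2e_3\cdots$ type elements help), and extract that only scalars and the pseudoscalar survive. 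Combined with the previous lemma that $\tilde T T\in\Cl^{\overline{01}}$ always, this forces $\tilde T T\in\Z^\times$, i.e. $T\in\A$. Some care is needed in the odd case $n\equiv1\bmod4$ where $\Z=\Cl^0\oplus\Cl^n$ and $\Cl^n\subseteq\Cl^{\overline{01}}$ only when $n\equiv 0,1\bmod 4$ — here $n\equiv1\bmod4$ so $e_{1\ldots n}\in\Cl^{\overline{1}}$ and the arithmetic in (\ref{tt4}) is exactly what makes the statement consistent.

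For $\Gamma^{\overline{23}}=\A$: I would use that $\Cl^{\overline{23}}$ consists of elements with $\tilde U=-U$ (again from (\ref{quat}): $\frac{m(m-1)}{2}$ is odd for $m=2,3$). Repeating the argument of the first paragraph verbatim with the sign flip: for $T\in\A$ and $U\in\Cl^{\overline{23}}$, $\widetilde{TUT^{-1}}=-TUT^{-1}$ by the same central-$W$ manipulation, and the grade range argument (or rather, the fact that $\Cl^{\overline{23}}=\{U:\tilde U=-U\}$) gives $TUT^{-1}\in\Cl^{\overline{23}}$, so $\A\subseteq\Gamma^{\overline{23}}$. Conversely, $\Gamma^{\overline{23}}\subseteq\A$ runs as in the second paragraph: reversion-anti-invariance of $\Cl^{\overline{23}}$ turns $TUT^{-1}\in\Cl^{\overline{23}}$ into the statement that $\tilde T T$ commutes with all of $\Cl^{\overline{23}}$, and since $\Cl^{\overline{23}}$ also contains enough generators (it contains $\Cl^2$ and $\Cl^3$, in particular all $e_{ab}$ and $e_{abc}$), its commutant is again $\Z$, so $\tilde T T\in\Z^\times$.

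\textbf{Main obstacle.} The routine part is the reversion bookkeeping; the one genuine lemma I must supply is the computation of the commutant of $\Cl^{\overline{01}}$ (and of $\Cl^{\overline{23}}$) inside $\Cl$, i.e. the analogue of Lemma \ref{lemmacenter2}. I expect this to be straightforward for $n\geq 2$ by the generator-pair expansion technique already used, but one has to be mildly careful that $\Cl^{\overline{01}}$ and $\Cl^{\overline{23}}$ each contain enough low-grade generators to pin the commutant down to the center, and to track the pseudoscalar $e_{1\ldots n}$ correctly across the four residue classes of $n\bmod 4$ so that the two cases in (\ref{tt4}) emerge exactly.
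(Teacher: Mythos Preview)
Your approach is essentially the paper's: show $\A\subseteq\Gamma^{\overline{01}}$, $\A\subseteq\Gamma^{\overline{23}}$ by the reversion computation with central $W=\tilde T T$, then for the converses show $\tilde T T$ commutes with the relevant subspace and deduce $\tilde T T\in\Z^\times$.

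Two points where you make it harder than it is, and one small gap. First, the ``genuine lemma'' you flag for the commutant of $\Cl^{\overline{01}}$ is immediate: $\Cl^1\subseteq\Cl^{\overline{01}}$, so commuting with $\Cl^{\overline{01}}$ already forces commuting with every $e_a$, hence $\tilde T T\in\Z^\times$ directly --- no analogue of Lemma~\ref{lemmacenter2} is needed. Second, for $\Gamma^{\overline{23}}\subseteq\A$ with $n\geq 3$ the paper does exactly what you sketch (write $e_a$ as a product of a grade-$2$ and a grade-$3$ element, e.g.\ $e_1$ from $e_{123}$ and $e_{23}$). The gap is at $n=2$: there $\Cl^{\overline{23}}=\Cl^2$ is spanned by $e_{12}$ alone, and its commutant is $\Cl^0\oplus\Cl^2$, \emph{not} $\Z=\Cl^0$. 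Your claim ``its commutant is again $\Z$'' fails here. The paper closes this case by invoking the preceding lemma that $\tilde T T\in\Cl^{\overline{01}}$ always; intersecting $\Cl^0\oplus\Cl^2$ with $\Cl^{\overline{01}}=\Cl^0\oplus\Cl^1$ gives $\Cl^0=\Z$. So the extra input you need for $n=2$ is precisely the a priori constraint $\tilde T T\in\Cl^{\overline{01}}$, not a sharper commutant computation.
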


\begin{proof} Let us prove that $\A\subseteq\Gamma^{\overline{01}}$ and $\A\subseteq\Gamma^{\overline{23}}$. Let us have $\tilde{T} T=W\in\Z^\times$. For $U\in \Cl^{\overline{01}}$, we have
$$\widetilde{TUT^{-1}}= \widetilde{T^{-1}} U \tilde{T}=TW^{-1}U W T^{-1}= TUT^{-1}$$
i.e. $TUT^{-1}\in \Cl^{\overline{01}}$.
For $U\in \Cl^{\overline{23}}$, we have
$$\widetilde{TUT^{-1}}= -\widetilde{T^{-1}} U \tilde{T}=-TW^{-1}U WT^{-1}= -TUT^{-1}$$
i.e. $TUT^{-1}\in \Cl^{\overline{23}}$.

Let us prove that $\Gamma^{\overline{01}}\subseteq \A$. For $U\in \Cl^{\overline{01}}$, we have
$$TUT^{-1}= \widetilde{TUT^{-1}}= \widetilde{T^{-1}} U \tilde{T}.$$
Multiplying both sides on the right by $T$ and on the left by $\tilde{T}$, we get
$$(\tilde{T}T)U=U (\tilde{T}T)\qquad \forall U\in \Cl^{\overline{01}}.$$
In particular, we conclude that $\tilde{T}T$ commutes with all generators $e_a\in \Cl^{\overline{01}}$, $a=1, \ldots, n $. Thus $\tilde{T}T\in\Z^\times$.

Let us prove that $\Gamma^{\overline{23}}\subseteq \A$. For $U\in \Cl^{\overline{23}}$, we have
$$-TUT^{-1}= \widetilde{TUT^{-1}}= -\widetilde{T^{-1}} U \tilde{T}.$$
We get
$$(\tilde{T}T)U=U (\tilde{T}T)\qquad \forall U\in \Cl^{\overline{23}}.$$
If $n\geq 3$, then we can always represent each generator $e_a$, $a=1,\ldots, n$ as the product of elements of grade 2 and 3. This implies that $\tilde{T}T$ commutes with all generators $e_a$, $a=1, \ldots, n $. For example, if $\tilde{T}T$ commutes with $e_{123}$ and $e_{23}$, then it commutes with $e_1$. We obtain $\tilde{T}T\in\Z^\times$. If $n=2$, then from $[\tilde{T}T, e_{12}]=0$, we obtain $\tilde{T}T\in \Cl^0\oplus \Cl^2$. Using $\tilde{T}T\in \Cl^{\overline{01}}$, we get $\tilde{T}T\in \Cl^{\times 0}=\Z^\times$.
\end{proof}

\section{The Lie groups $\B$}\label{sectB}

Let us consider the following group
\begin{eqnarray}
\B:=\{T\in \Cl^\times:\quad \hat{\tilde{T}} T\in \Z^\times\}\label{4.3t}.
%???&=&\{W V_1 \cdots V_m,\quad m\leq ???,\quad W\in \Z^\times_{p,q},\quad  V_j\in\cl^{\overline{03}}_{p,q}\}.\nonumber
\end{eqnarray}
The norm function $\chi(T):=\hat{\tilde{T}}T$ as well as the function $\psi(T)$ (see above) is widely used in the theory of spin groups (see \cite{BT, ABS, Lounesto, Bulg}).
% We have $\psi(T)=\pm\chi(T)$ for $T\in\Gamma^\pm_{p,q}$, but $\psi(T)\neq \pm \chi(T)$ for the general case $T\in\Gamma_{p,q}$.

\begin{lem} The norm function $\hat{\tilde{T}}T$ takes values in $\Cl^{\overline{03}}$, i.e.
$$\hat{\tilde{T}}T\in \Cl^{\overline{03}}\qquad \forall T\in \Cl.$$
As a consequence, we get
\begin{eqnarray}
\B=\left\lbrace
\begin{array}{ll}
\{T\in \Cl^\times:\quad \hat{\tilde{T}} T\in \Cl^{\times 0}\}, & \mbox{if $n=0, 1, 2 \mod 4$,}\\
\{T\in \Cl^\times:\quad \hat{\tilde{T}} T\in (\Cl^{0}\oplus \Cl^{n})^\times\}, & \mbox{if $n=3\mod 4$.}
\end{array}
\right.\label{tt5}
\end{eqnarray}
In the particular cases, we have
$$\B= \Cl^\times,\qquad n\leq 3.$$
\end{lem}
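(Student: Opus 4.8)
The plan is to first establish the claim that $\hat{\tilde{T}}T \in \Cl^{\overline{03}}$ for all $T \in \Cl$, and then deduce the two displayed descriptions of $\B$ together with the small-$n$ statement. For the first part, observe that the operation $T \mapsto \hat{\tilde{T}}$ is an anti-automorphism (it is the composition of the reversion and the grade involution, both of which commute, and reversion is an anti-automorphism while grade involution is an automorphism). Hence $\widehat{\widetilde{\hat{\tilde{T}}T}} = \hat{\tilde{T}}\,\widehat{\widetilde{\hat{\tilde{T}}}} = \hat{\tilde{T}}\,T$, since $\hat{\tilde{\hat{\tilde{T}}}} = T$ (both operations are involutions and commute). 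So the element $\hat{\tilde{T}}T$ is fixed by the anti-automorphism $\hat{\tilde{\phantom{x}}}$. Now I would read off from (\ref{quat}) which of the four subspaces $\Cl^{\overline m}$ are fixed by $\hat{\tilde{\phantom{x}}}$: on $\Cl^{\overline m}$ one has $\hat{U} = (-1)^m U$ and $\tilde{U} = (-1)^{m(m-1)/2}U$, so $\hat{\tilde{U}} = (-1)^{m + m(m-1)/2}U$; the exponent $m + m(m-1)/2 = m(m+1)/2$ is even exactly for $m = 0$ and $m = 3$. Therefore the fixed subspace of $\hat{\tilde{\phantom{x}}}$ is $\Cl^{\overline 0}\oplus\Cl^{\overline 3} = \Cl^{\overline{03}}$, which gives $\hat{\tilde{T}}T \in \Cl^{\overline{03}}$.

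For the "as a consequence" part, I would combine $\hat{\tilde{T}}T \in \Cl^{\overline{03}}$ with the defining condition $\hat{\tilde{T}}T \in \Z^\times$ from (\ref{4.3t}), using the description (\ref{center}) of the center. When $n \equiv 0,1,2 \pmod 4$: if $n$ is even the center is $\Cl^0$, so $\Z^\times \cap \Cl^{\overline{03}} = \Cl^{\times 0}$; if $n$ is odd (so $n\equiv 1 \pmod 4$ in this range) the center is $\Cl^0 \oplus \Cl^n$, and since $n \equiv 1 \pmod 4$ the subspace $\Cl^n$ lies in $\Cl^{\overline 1}$, which is not part of $\Cl^{\overline{03}}$, so again only the $\Cl^0$ part survives and $\Z^\times \cap \Cl^{\overline{03}} = \Cl^{\times 0}$. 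When $n \equiv 3 \pmod 4$: $n$ is odd, the center is $\Cl^0 \oplus \Cl^n$, and $\Cl^n \subseteq \Cl^{\overline 3} \subseteq \Cl^{\overline{03}}$, so the whole center is compatible and $\Z^\times \cap \Cl^{\overline{03}} = (\Cl^0 \oplus \Cl^n)^\times$. This yields exactly (\ref{tt5}). I would phrase this as: since $\hat{\tilde{T}}T$ automatically lies in $\Cl^{\overline{03}}$, the condition $\hat{\tilde{T}}T \in \Z^\times$ is equivalent to $\hat{\tilde{T}}T$ lying in the stated subalgebra.

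For the final particular case $\B = \Cl^\times$ when $n \le 3$, I would handle $n = 1$ trivially (the algebra is commutative, as already noted in the introduction, so $\B = \Cl^\times$), and for $n = 2, 3$ I would simply check that for every $T \in \Cl^\times$ the element $\hat{\tilde{T}}T$ already lies in the relevant center: when $n = 2$ we need $\hat{\tilde{T}}T \in \Cl^0$, and since $\hat{\tilde{T}}T \in \Cl^{\overline{03}} = \Cl^0 \oplus \Cl^3 \oplus \cdots$ truncates in dimension $2$ to $\Cl^0$ (there is no grade $3$), this is automatic; when $n = 3$ we need $\hat{\tilde{T}}T \in \Cl^0 \oplus \Cl^3$, and $\Cl^{\overline{03}} = \Cl^0 \oplus \Cl^3$ in dimension $3$ already equals the center, so again automatic. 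In both cases $\hat{\tilde{T}}T$ is also invertible (it is a product of invertibles), hence lies in $\Z^\times$, so $T \in \B$. The only mild subtlety — and the step I would be most careful about — is the bookkeeping of which grades of $\Cl^{\overline{03}}$ actually occur for small $n$ and the parity/center interaction in the $n \equiv 1 \pmod 4$ case; everything else is a direct application of the involution identities and (\ref{quat}).
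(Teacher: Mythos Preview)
Your proposal is correct and follows essentially the same approach as the paper: you show that $\hat{\tilde{T}}T$ is fixed under the anti-automorphism $\hat{\tilde{\phantom{x}}}$ via the identity $\widehat{\widetilde{\hat{\tilde{T}}T}}=\hat{\tilde{T}}\,\hat{\tilde{\hat{\tilde{T}}}}=\hat{\tilde{T}}T$, then use (\ref{quat}) to identify the fixed subspace as $\Cl^{\overline{03}}$, and finally intersect with the center (\ref{center}) to get (\ref{tt5}) and the $n\le 3$ statement. The paper's proof is a terse version of exactly this argument; your explicit computation of $m(m+1)/2$ modulo $2$ and the case-by-case intersection with $\Z$ simply spell out what the paper leaves implicit.
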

\begin{proof}
We have
$\widehat{\widetilde{\hat{\tilde{T}}T}}=\hat{\tilde{T}} \hat{\tilde{\hat{\tilde{T}}}}=\hat{\tilde{T}}T$. We conclude that the operation $\hat{\tilde{\quad}}$ does not change the expression $\hat{\tilde{T}}T$.
Using (\ref{quat}), we obtain $\hat{\tilde{T}}T\in \Cl^{\overline{03}}$. Using (\ref{4.3t}), we get (\ref{tt5}). In the cases $n\leq 3$, the condition $\hat{\tilde{T}} T\in \Z^\times=\Cl^{\overline{03}}$ holds automatically.
\end{proof}

\begin{thm}\label{th4t} The following three groups coincide
$$\B=\Gamma^{\overline{03}}=\Gamma^{\overline{12}}.$$
\end{thm}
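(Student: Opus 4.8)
The plan is to mirror the proof of Theorem \ref{th3t} (the $\A$-case), replacing the reversion $\tilde{\;}$ by the composite anti-involution $\hat{\tilde{\;}}$ throughout. Recall that $\hat{\tilde{\;}}$ is an anti-automorphism of $\Cl$ (being the composition of the anti-automorphism $\tilde{\;}$ with the automorphism $\hat{\;}$), so $\widehat{\widetilde{TUT^{-1}}}=\widehat{\widetilde{T^{-1}}}\,\widehat{\tilde U}\,\widehat{\tilde T}$, and on the subspaces $\Cl^{\overline{m}}$ it acts by $\widehat{\tilde U}=(-1)^m(-1)^{m(m-1)/2}U$ for $U\in\Cl^{\overline m}$, by \eqref{quat}. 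This gives sign $+1$ on $\Cl^{\overline 0}$ and $\Cl^{\overline 3}$, and sign $-1$ on $\Cl^{\overline 1}$ and $\Cl^{\overline 2}$; that is exactly why $\Cl^{\overline{03}}$ and $\Cl^{\overline{12}}$ are the relevant fixed/anti-fixed subspaces, and why the lemma just proved tells us $\chi(T)=\hat{\tilde T}T\in\Cl^{\overline{03}}$.

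First I would show $\B\subseteq\Gamma^{\overline{03}}$ and $\B\subseteq\Gamma^{\overline{12}}$. Suppose $\hat{\tilde T}T=W\in\Z^\times$. For $U\in\Cl^{\overline{03}}$ we compute
$$\widehat{\widetilde{TUT^{-1}}}=\widehat{\widetilde{T^{-1}}}\,U\,\hat{\tilde T}=T W^{-1}U W T^{-1}=TUT^{-1},$$
using that $W$ is central, so $TUT^{-1}\in\Cl^{\overline{03}}$; for $U\in\Cl^{\overline{12}}$ the same computation produces a global minus sign, giving $\widehat{\widetilde{TUT^{-1}}}=-TUT^{-1}$, hence $TUT^{-1}\in\Cl^{\overline{12}}$. (Here I also use $\widehat{\widetilde{T^{-1}}}=(\hat{\tilde T})^{-1}$ and that $\Z^\times$ is central, so $W^{-1}$ commutes with everything.)

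For the reverse inclusions, take $T\in\Gamma^{\overline{03}}$. For every $U\in\Cl^{\overline{03}}$ we have $TUT^{-1}=\widehat{\widetilde{TUT^{-1}}}=\widehat{\widetilde{T^{-1}}}\,U\,\hat{\tilde T}$; multiplying on the left by $\hat{\tilde T}$ and on the right by $T$ yields $(\hat{\tilde T}T)U=U(\hat{\tilde T}T)$ for all $U\in\Cl^{\overline{03}}$. Since $\Cl^{\overline 3}\subseteq\Cl^{\overline{03}}$ and, for $n\geq 3$, every generator $e_a$ can be written as a product of a grade-$3$ element and a grade-$2$ element — wait, more directly: for $n\geq 1$ the scalars and, for $n\geq 4$, elements like $e_{abc}\in\Cl^3\subseteq\Cl^{\overline 3}$ are available, and one checks that $\hat{\tilde T}T$ commuting with enough such elements forces it into the center. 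Concretely: $\Cl^{\overline 0}\oplus\Cl^{\overline 3}$ already contains $\Cl^0$, $\Cl^3$, $\Cl^4$, so for $n\geq 3$ the commutation with all of $\Cl^3$ suffices (for any two generators $e_a,e_b$ with $a\neq b$ pick a third index $c$; commuting with $e_{abc}$ and $e_{acd}$-type elements pins down commutation with each $e_a$), giving $\hat{\tilde T}T\in\Z^\times$. For $n\leq 3$ we have $\Z^\times=\Cl^{\overline{03}}$ by the lemma, so $\B=\Cl^\times=\Gamma^{\overline{03}}$ trivially, and likewise $\Gamma^{\overline{12}}=\Cl^\times$ since $\Cl^{\overline{12}}$-preservation is automatic when $\Cl^{\overline{12}}$ is itself an ideal-like piece; the small cases must be checked by hand. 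The inclusion $\Gamma^{\overline{12}}\subseteq\B$ is identical: for $U\in\Cl^{\overline{12}}$ one gets $-TUT^{-1}=-\widehat{\widetilde{T^{-1}}}\,U\,\hat{\tilde T}$, the minus signs cancel, and again $(\hat{\tilde T}T)U=U(\hat{\tilde T}T)$; since $\Cl^1\subseteq\Cl^{\overline 1}\subseteq\Cl^{\overline{12}}$ contains all generators $e_a$, we immediately conclude $\hat{\tilde T}T$ is central.

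The main obstacle is the low-dimensional bookkeeping. For $n\geq 3$ the argument that "commuting with $\Cl^{\overline{03}}$ forces centrality" needs the same kind of product-of-generators trick used in Theorem \ref{th3t} for $\Cl^{\overline{23}}$, and one must verify it works with grades $\{0,3,4,7,\dots\}$ rather than $\{2,3\}$ — in particular checking that for $n=3,4,5$ enough elements of $\Cl^{\overline 3}$ are present to separate the generators, and otherwise falling back on the cases $n\leq 3$ where $\B=\Cl^\times$ outright. The $\Gamma^{\overline{12}}\subseteq\B$ direction is the easy one, since $\Cl^1$ sits inside $\Cl^{\overline{12}}$ and the generators are immediately available. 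I would write the $\Cl^{\overline{12}}$ inclusions in full and then say the $\Cl^{\overline{03}}$ inclusions follow "by the same reasoning as in the proof of Theorem \ref{th3t}, using that $\hat{\tilde T}T$ commutes with $\Cl^3$ and hence with all generators when $n\geq 3$, while for $n\leq 3$ the claim is the preceding lemma."
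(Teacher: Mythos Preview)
Your approach mirrors the paper's proof almost exactly: the forward inclusions $\B\subseteq\Gamma^{\overline{03}}$, $\B\subseteq\Gamma^{\overline{12}}$ and the reverse inclusion $\Gamma^{\overline{12}}\subseteq\B$ (via $e_a\in\Cl^1\subset\Cl^{\overline{12}}$) are identical to what the paper does. No special low-dimensional handling is needed for $\Gamma^{\overline{12}}$ --- the generators sit in $\Cl^{\overline{12}}$ for every $n\geq 1$, so drop the ``ideal-like'' remark.

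The one slip is in your $\Gamma^{\overline{03}}\subseteq\B$ argument. The product $e_{abc}\,e_{acd}$ equals $\pm e_{bd}$, a grade-$2$ element, not a generator; a product of two grade-$3$ elements is always even, so your stated mechanism does not directly deliver commutation with each $e_a$. The paper fixes this by using one grade-$3$ and one grade-$4$ element: for $n\geq 4$, if $\hat{\tilde T}T$ commutes with $e_{234}\in\Cl^3\subset\Cl^{\overline 3}$ and with $e_{1234}\in\Cl^4\subset\Cl^{\overline 0}$, then it commutes with their product $\pm e_1$, and similarly for every generator. For $n\leq 3$ the paper, like you, simply invokes the preceding lemma to get $\B=\Cl^\times=\Gamma^{\overline{03}}$ outright. (Your route through $\Cl^3$ alone \emph{can} be repaired --- e.g.\ $e_{123}e_{124}e_{134}=\pm e_1$ for $n\geq 4$ --- but the paper's grade-$3$ times grade-$4$ trick is cleaner and only needs two factors.)
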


\begin{proof} Let us prove that $\B\subseteq\Gamma^{\overline{03}}$ and $\B\subseteq\Gamma^{\overline{12}}$. Let us have $\hat{\tilde{T}} T=W\in\Z^\times$. For $U\in \Cl^{\overline{03}}$, we have
$$\widehat{\widetilde{TUT^{-1}}}= \widehat{\widetilde{T^{-1}}} U \tilde{\hat{T}}= T W^{-1}U WT^{-1}=TUT^{-1},$$
i.e. $TUT^{-1}\in \Cl^{\overline{03}}$. For $U\in \Cl^{\overline{12}}$, we have
$$\widehat{\widetilde{TUT^{-1}}}= -\widehat{\widetilde{T^{-1}}} U \tilde{\hat{T}}=- T W^{-1}U W T^{-1} =-TUT^{-1},$$
i.e. $TUT^{-1}\in \Cl^{\overline{12}}$.

Let us prove that $\Gamma^{\overline{12}}\subseteq\B$. For $U\in \Cl^{\overline{12}}$, we have
$$-TUT^{-1}= \widehat{\widetilde{TUT^{-1}}}=-\widehat{\widetilde{T^{-1}}} U \tilde{\hat{T}}.$$
Multiplying both sides on the right by $T$ and on the left by $\tilde{\hat{T}}$, we get
$$(\tilde{\hat{T}}T)U=U (\tilde{\hat{T}}T)\qquad \forall U\in \Cl^{\overline{12}}.$$
In particular, we conclude that $\tilde{\hat{T}}T$ commutes with all generators $e_a\in \Cl^{\overline{12}}$, $a=1, \ldots, n$. Thus $\tilde{\hat{T}}T\in \Z^\times$.

Let us prove that $\Gamma^{\overline{03}}\subseteq\B$. For $U\in \Cl^{\overline{03}}$, we have
$$TUT^{-1}= \widehat{\widetilde{TUT^{-1}}}= \widehat{\widetilde{T^{-1}}} U \tilde{\hat{T}}.$$
Multiplying both sides on the right by $T$ and on the left by $\tilde{\hat{T}}$, we get
$$(\tilde{\hat{T}}T)U=U (\tilde{\hat{T}}T)\qquad \forall U\in \Cl^{\overline{03}}.$$
For $n\leq 3$, we get $\Gamma^{\overline{03}}=\B= \Cl^\times$. If $n\geq 4$, then we can always represent each generator $e_a$, $a=1, \ldots, n$, as the product of two elements of grades 3 and 4. This implies that $\tilde{\hat{T}}T$ commutes with all generators $e_a$, $a=1,\ldots, n$. For example, if $\tilde{\hat{T}}$ commutes with $e_{234}$ and $e_{1234}$, then it commutes with $e_1$. We get $\tilde{\hat{T}}T\in \Z^\times$.
\end{proof}

\section{The Lie groups $\Q$ and $\Q^\prime$}\label{sectQ}

Let us consider the following group
\begin{eqnarray}
\Q&:=&\{T\in \Z^\times(\Cl^{\times (0)}\cup \Cl^{\times (1)}):\quad \tilde{T} T\in \Z^\times\}.\label{Qpq}
%\\&=&\{T\in \Z^\times_{p,q}(\cl^{\times (0)}_{p,q}\cup\cl^{\times (1)}_{p,q}):\quad \hat{\tilde{T}} T\in \Z^\times_{p,q}\}.\nonumber
\end{eqnarray}

\begin{lem}\label{lemQ} We have
\begin{eqnarray}
\Q=\A\cap\P=\B\cap\P=\A\cap \B,\qquad \Q\subseteq \P,\qquad \Q\subseteq \A,\qquad \Q\subseteq \B.\label{yyy4}
\end{eqnarray}
In the particular cases, we have
\begin{eqnarray}
&&\Q=\P=\Z^\times(\Cl^{\times (0)}\cup \Cl^{\times (1)}),\qquad n\leq 3;\label{rty}\\
&&\Q\neq \P,\qquad \A\neq \P,\qquad n=4.\label{rty2}
\end{eqnarray}
\end{lem}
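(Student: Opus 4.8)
The plan is to derive everything from Theorem~\ref{th1} (in particular from the description $\P=\Z^\times(\Cl^{\times (0)}\cup \Cl^{\times (1)})$), Theorems~\ref{th3t} and \ref{th4t}, and the lemmas of Sections~\ref{sectA} and \ref{sectB}. First observe that $\Q=\A\cap\P$ is essentially a tautology: by Theorem~\ref{th1} one may rewrite $\Q=\{T\in\P:\ \tilde T T\in\Z^\times\}$, and since $\A=\{T\in\Cl^\times:\ \tilde T T\in\Z^\times\}$ and $\P\subseteq\Cl^\times$, this set is exactly $\A\cap\P$. To obtain $\A\cap\P=\B\cap\P$ I would take $T\in\P$ and write $T=WT_0$ with $W\in\Z^\times$ and $T_0$ of pure parity, so that $\hat T_0=\varepsilon T_0$ with $\varepsilon=\pm1$. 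Since the grade involution and the reversion commute and $W$, $\tilde W$, $\hat{\tilde W}$ are invertible elements of the center, a one-line computation gives $\hat{\tilde T}T=\varepsilon\,\hat{\tilde W}(\tilde W)^{-1}\,\tilde T T$, where $\varepsilon\,\hat{\tilde W}(\tilde W)^{-1}\in\Z^\times$. Hence $\tilde T T\in\Z^\times\iff\hat{\tilde T}T\in\Z^\times$, that is $T\in\A\iff T\in\B$ for every $T\in\P$; combined with $\Q=\A\cap\P$ this yields $\Q=\A\cap\P=\B\cap\P$.

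For the identity $\A\cap\B=\Q$, one inclusion ($\Q\subseteq\A\cap\B$) is immediate from the previous paragraph. For the converse, let $T\in\A\cap\B$, say $\tilde T T=W_1\in\Z^\times$ and $\hat{\tilde T}T=W_2\in\Z^\times$; eliminating $T$ (using centrality of $W_1$) gives $\widehat{\tilde T}=(W_2W_1^{-1})\,\tilde T$, an equation of the form $\hat S=WS$ with $S:=\tilde T$ and $W:=W_2W_1^{-1}\in\Z^\times$. This is exactly the relation from which the proof of Theorem~\ref{th1} derives $S\in\P$; since $\P$ is stable under the reversion, $T=\tilde S\in\P$, hence $T\in\A\cap\P=\Q$. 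The inclusions $\Q\subseteq\P$, $\Q\subseteq\A$, $\Q\subseteq\B$ then follow trivially.

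It remains to treat the particular cases. For $n\le 3$ we have $\B=\Cl^\times$ by the lemma of Section~\ref{sectB}, hence $\Q=\B\cap\P=\P=\Z^\times(\Cl^{\times (0)}\cup \Cl^{\times (1)})$, which is (\ref{rty}). For $n=4$ it is enough to produce a single element of $\P\setminus\A$, since that forces at once $\Q=\A\cap\P\subsetneq\P$ and $\A\ne\P$, i.e.\ (\ref{rty2}). I would take $T=e+2e_{1234}$: it lies in $\Cl^{\times (0)}\subseteq\P$ because $(e+2e_{1234})(e-2e_{1234})=e-4(e_{1234})^{2}=(1\mp4)e$ is a nonzero scalar for every real signature and over $\C$ (using $(e_{1234})^{2}=\pm e$). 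As $\widetilde{e_{1234}}=(-1)^{6}e_{1234}=e_{1234}$ we get $\tilde T=T$, so $\tilde T T=T^{2}=(1\pm4)e+4e_{1234}$ has a nonzero component of grade $4$ and therefore does not belong to $\Z^\times=(\Cl^{0})^\times$; thus $T\notin\A$.

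The only genuinely delicate point is this separation at $n=4$: one needs a single element that is simultaneously of pure parity, invertible for every real signature (and over $\C$), and has non-central norm $\tilde T T$, and $e+2e_{1234}$ meets all three requirements. Everything else is formal manipulation of the two involutions together with the already-established descriptions of $\P$, $\A$ and $\B$.
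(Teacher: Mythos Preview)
Your proof is correct and follows the paper's line: the only nontrivial inclusion, $\A\cap\B\subseteq\P$, is reduced to an equation of the form $\hat S=WS$ with $W\in\Z^\times$ and then handed to the analysis already carried out in the proof of Theorem~\ref{th1}. The only cosmetic differences are that for $n\le 3$ you invoke $\B=\Cl^\times$ (a slightly slicker shortcut than the paper's direct use of $\tilde T T\in\Cl^{\overline{01}}$ and $\hat{\tilde T}T\in\Cl^{\overline{03}}$), and for $n=4$ your separating element is $T=e+2e_{1234}$ rather than the paper's $T=e_{12}+2e_{34}$ --- both work, and in fact your element is precisely the one the paper itself uses in Lemma~\ref{lemQQ} to separate $\Q$ from $\Q'$.
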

Note that below (see Lemma \ref{lem7}), using auxiliary statements, we also prove that $\Q=\P=\A$ in the cases $n\leq 3$, and $\Q\neq\A$ in the case $n=4$.
\begin{proof} The first two statements (\ref{yyy4}) are trivial because of the definition (\ref{Qpq}). Let us prove the nontrivial statement $\A\cap\B=\Q$. Suppose we have element $T\in \Cl^\times$ such that $\tilde{T}T=W_1\in\Z^\times$ and $\hat{\tilde{T}} T=W_2\in\Z^\times$. We get $\hat{T}=W T$ for some $W\in\Z^\times$. In the proof of Theorem \ref{th1}, we have already shown that this implies $T\in\Z^\times(\Cl^{\times (0)}\cup \Cl^{\times (1)})$. Thus $\A\cap\B=\Q$.

For $n\leq 3$, we have $\Cl^{\overline{0}}=\Cl^0$. Using $\tilde{T}T\in \Cl^{\overline{01}}$ and $\hat{\tilde{T}}T\in \Cl^{\overline{03}}$, we get $\Q=\P$ for $n\leq 3$.

In the case $n=4$, the element
$$T=e_{12}+2e_{34}\in \Cl^{(0)}$$
is invertible because
$$(e_{12}+2e_{34})(e_{12}-2e_{34})=(e_{12})^2-4(e_{34})^2$$
is a nonzero scalar. Also we have
$$\tilde{T} T=-(e_{12}+2e_{34})(e_{12}+2e_{34})=-(e_{12})^2-4(e_{34})^2-4e_{1234}\notin \Z^\times,$$
i.e. $T\in\P$, $T\notin\A$, and $T\notin\Q$. Thus $\P\neq \A$ and $\P\neq \Q$ in the case $n=4$.
\end{proof}

Let us consider the group
\begin{eqnarray}
\Q^\prime:=\{T\in \Z^\times(\Cl^{\times (0)}\cup \Cl^{\times (1)}):\quad \tilde{T} T\in (\Cl^{0}\oplus \Cl^n)^\times\}.
\end{eqnarray}
This group coincides with the group $\Q$ in the cases $n=1, 3\mod 4$ because $\Z=\Cl^0\oplus \Cl^n$ for odd $n$, and in the case $n=2\mod 4$ because $\tilde{T}T\in \Cl^{\overline{01}}$, $\hat{\tilde{T}}T\in \Cl^{\overline{03}}$:
$$\Q^\prime=\Q,\qquad n=1, 2, 3\mod 4.$$
Let us consider the group $\Q^\prime$ in the case $n=0\mod 4$.
%In the cases $n=0, 2\mod 4$:
%\begin{eqnarray}
%\Q_{p,q}&=&\{T\in \cl^{\times (0)}_{p,q}\cup\cl^{\times (1)}_{p,q}:\quad \tilde{T} T\in \cl^{\times 0}_{p,q}\}\label{Q0}\\
%&=&\{T\in \cl^{\times (0)}_{p,q}\cup\cl^{\times (1)}_{p,q}:\quad \hat{\tilde{T}} T\in \cl^{\times 0}_{p,q}\};\nonumber
%\end{eqnarray}
%in the case $n=1\mod 4$:
%\begin{eqnarray}
%\Q_{p,q}&=&\{T\in \Z^\times_{p,q}(\cl^{\times (0)}_{p,q}\cup\cl^{\times (1)}_{p,q}):\quad \tilde{T} T\in (\cl^{0}_{p,q}\oplus\cl^{n}_{p,q})^\times\}\label{Q1}\\
%&=&\{T\in \Z^\times_{p,q}(\cl^{\times (0)}_{p,q}\cup\cl^{\times (1)}_{p,q}):\quad \hat{\tilde{T}} T\in \cl^{\times 0}_{p,q}\};\nonumber
%\end{eqnarray}
%in the case $n=3\mod 4$:
%\begin{eqnarray}
%\Q_{p,q}&=&\{T\in \Z^\times_{p,q}(\cl^{\times (0)}_{p,q}\cup\cl^{\times (1)}_{p,q}):\quad \tilde{T} T\in \cl^{\times 0}_{p,q}\}\label{Q3}\\
%&=&\{T\in \Z^\times_{p,q}(\cl^{\times (0)}_{p,q}\cup\cl^{\times (1)}_{p,q}):\quad \hat{\tilde{T}} T\in (\cl^{0}_{p,q}\oplus\cl^{n}_{p,q})^\times\}.\nonumber
%\end{eqnarray}

\begin{lem}\label{lemQQ} We have
\begin{equation}
\Q\subseteq \Q^\prime \subseteq \P,\qquad n=0\mod 4,\label{QQprime}
\end{equation}
where
\begin{eqnarray}
&&\Q\neq \Q^\prime,\qquad n=4, 8, 12, \ldots; \label{p1}\\
&&\Q^\prime=\P,\qquad n=4;\label{p2}\\
&&\Q^\prime\neq \P,\qquad n=8, 12, 16, \ldots\label{p3}
\end{eqnarray}
For the groups
\begin{eqnarray}
\A^\prime&:=&\{T\in \Cl^\times:\quad \tilde{T}T\in(\Cl^{0}\oplus \Cl^n)^\times\},\\
\B^\prime&:=&\{T\in \Cl^\times:\quad \hat{\tilde{T}}T\in(\Cl^{0}\oplus \Cl^n)^\times\},
\end{eqnarray}
we have
\begin{eqnarray}
\A^\prime\cap\B^\prime=\Q^\prime.\label{ABQprime}
\end{eqnarray}
\end{lem}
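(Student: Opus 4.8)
The plan is to establish \eqref{QQprime} and \eqref{ABQprime} first, since they are structural, then dispose of the three inequality/equality claims \eqref{p1}--\eqref{p3} by exhibiting explicit witnessing elements. For \eqref{QQprime}, the inclusion $\Q\subseteq\Q^\prime$ is immediate from the definitions, since $\Cl^0\subseteq\Cl^0\oplus\Cl^n$ forces $\Z^\times\subseteq(\Cl^0\oplus\Cl^n)^\times$ whenever $n$ is even (here $\Z=\Cl^0$). The inclusion $\Q^\prime\subseteq\P$ is also immediate, because every element of $\Q^\prime$ already lies in $\Z^\times(\Cl^{\times(0)}\cup\Cl^{\times(1)})=\P$ by the very form of the definition of $\Q^\prime$. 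So \eqref{QQprime} costs essentially nothing.

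For \eqref{ABQprime} I would mimic the proof of $\A\cap\B=\Q$ from Lemma \ref{lemQ}. Suppose $T\in\A^\prime\cap\B^\prime$, so $\tilde T T=W_1$ and $\hat{\tilde T}T=W_2$ with $W_1,W_2\in(\Cl^0\oplus\Cl^n)^\times$. Then $\hat{\tilde T}T(\tilde T T)^{-1}=W_2 W_1^{-1}$; since $\Cl^0\oplus\Cl^n$ is commutative and closed under the relevant involutions, a short manipulation (exactly as in the proof of Theorem \ref{th1}, replacing $\Z^\times$ by $(\Cl^0\oplus\Cl^n)^\times$, which is legitimate because $\hat{\tilde T}T$ and $\tilde T T$ differ by $\hat{\quad}$ applied on the appropriate side) yields $\hat T=W T$ for some $W\in(\Cl^0\oplus\Cl^n)^\times=\Z^\times$ (using that $n\equiv 0\bmod 4$, so $\Cl^0\oplus\Cl^n$ is central). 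The argument in the proof of Theorem \ref{th1} then shows $T\in\Z^\times(\Cl^{\times(0)}\cup\Cl^{\times(1)})$, hence $T\in\Q^\prime$. Conversely $\Q^\prime\subseteq\A^\prime$ trivially, and $\Q^\prime\subseteq\B^\prime$ follows because for $T\in\Q^\prime$ we have $\hat T=WT$ with $W\in\Z^\times$, so $\hat{\tilde T}T=\widetilde{\hat T}T=\widetilde{WT}T=\tilde T\tilde W T$; since $\tilde W\in\Z^\times$ and $\tilde T T\in(\Cl^0\oplus\Cl^n)^\times$, we get $\hat{\tilde T}T\in(\Cl^0\oplus\Cl^n)^\times$.

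The remaining claims require explicit elements. For \eqref{p1}, I would take the element $T=e_{12\ldots n}$ (the pseudoscalar), or more robustly an element of the form $e+\lambda e_{1\ldots n}$ with a suitable scalar $\lambda$: such $T$ lies in $\P$ (indeed in $\Z^\times$) and one computes $\tilde T T$ directly; for $n\equiv 0\bmod 4$ the reversion sign on $\Cl^n$ is $+1$, so $\tilde T T$ has a genuine $\Cl^n$-component and lies in $(\Cl^0\oplus\Cl^n)^\times$ but not in $\Cl^0=\Z^\times$, giving $T\in\Q^\prime\setminus\Q$. For \eqref{p2}, when $n=4$ one has $\Cl^0\oplus\Cl^4$ is exactly the set of elements left invariant appropriately, and I would show every $T\in\P=\Cl^{\times(0)}\cup\Cl^{\times(1)}$ satisfies $\tilde T T\in\Cl^0\oplus\Cl^4$: write $T=T_0+T_1$ with only one parity present; in dimension $4$ the possible grades of $\tilde T T$ are constrained, and one checks case by case (even $T$: grades $0$ and $4$ only survive after symmetrization under reversion; odd $T$: grade $1$ and $3$ pieces, but $\tilde T T$ for odd $T$ still lands in $\Cl^{\overline{01}}\cap(\text{even})=\Cl^0$ in dimension $4$, or more carefully in $\Cl^0\oplus\Cl^4$ after the $n=4$ analysis). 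For \eqref{p3}, when $n=8,12,\ldots$ I would reuse the trick from \eqref{rty2}: the element $T=e_{12}+2e_{34}$ (padded into the larger algebra, which still has these generators) lies in $\Cl^{(0)}\subseteq\P$, is invertible since $(e_{12}+2e_{34})(e_{12}-2e_{34})$ is a nonzero scalar, yet $\tilde T T=-(e_{12})^2-4(e_{34})^2-4e_{1234}$ has a $\Cl^4$-component with $4\neq 0,n$, so $\tilde T T\notin\Cl^0\oplus\Cl^n$, hence $T\in\P\setminus\Q^\prime$.

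The main obstacle I anticipate is \eqref{p2}: proving $\Q^\prime=\P$ for $n=4$ requires showing that \emph{every} invertible even or odd element $T$ has $\tilde T T\in(\Cl^0\oplus\Cl^4)^\times$, which is a genuine (if elementary) computation in $\Cl(V,Q)$ with $\dim V=4$, not just the exhibition of one example. One clean way is to note $\tilde T T\in\Cl^{\overline{01}}$ always (by the Lemma preceding Theorem \ref{th3t}), and for $T\in\Cl^{(0)}$ one also has $\tilde T T\in\Cl^{(0)}$, so $\tilde T T\in\Cl^{\overline{01}}\cap\Cl^{(0)}=\Cl^{\overline 0}=\Cl^0\oplus\Cl^4$ when $n=4$; for $T\in\Cl^{(1)}$ one has $\tilde T T\in\Cl^{\overline{01}}\cap\Cl^{(0)}$ as well (product of two odd elements is even), giving the same conclusion. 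That reduces \eqref{p2} to the parity/grade bookkeeping already available, so the obstacle is more apparent than real — the real care is in \eqref{p1}, making sure the chosen $T$ genuinely has a nonzero $\Cl^n$-part in $\tilde T T$ for \emph{all} $n\equiv 0\bmod 4$ and not merely $n=4$, for which a dimension-count or an explicit product of $n/2$ commuting $2$-blades (e.g. $T=(e+e_{12})(e+e_{34})\cdots(e+e_{(n-1)n})$) provides a uniform witness.
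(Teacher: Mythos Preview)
Your approach matches the paper's almost exactly: the same witness $e+\lambda e_{1\ldots n}$ (the paper takes $\lambda=2$) for (\ref{p1}), the same $\Cl^{\overline{01}}\cap\Cl^{(0)}=\Cl^{\overline 0}$ bookkeeping for (\ref{p2}), the same element $T=e_{12}+2e_{34}$ for (\ref{p3}), and the same reduction to the proof of Theorem~\ref{th1} for (\ref{ABQprime}).

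Two slips to fix. First, your claim that $(\Cl^0\oplus\Cl^n)^\times=\Z^\times$ because ``$\Cl^0\oplus\Cl^n$ is central'' when $n=0\mod 4$ is false: for even $n$ the pseudoscalar $e_{1\ldots n}$ anticommutes with all odd elements, so $\Z=\Cl^0$ strictly. This does not actually break your argument, because the even-$n$ branch of the proof of Theorem~\ref{th1} (the part $\Gamma^{(0)}\subseteq\P$) is written precisely for $W\in(\Cl^0\oplus\Cl^n)^\times$, not merely $W\in\Z^\times$; you are invoking the right result, just with a wrong justification attached. Second, your first candidate $T=e_{1\ldots n}$ for (\ref{p1}) fails: since $\widetilde{e_{1\ldots n}}=e_{1\ldots n}$ when $n=0\mod 4$, one gets $\tilde T T=(e_{1\ldots n})^2\in\Cl^0=\Z$, so this $T$ already lies in $\Q$. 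Your ``more robust'' alternative $e+\lambda e_{1\ldots n}$ is the one that works and is exactly what the paper uses.
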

\begin{proof} We obtain (\ref{QQprime}) using the definitions of the corresponding groups.

Let us prove (\ref{p1}). For the element $T=e+2e_{1\ldots n}$, we have
$$\tilde{T}T=(e+2e_{1\ldots n})(e+2e_{1\ldots n})=e+4(e_{1\ldots n})^2+4e_{1\ldots n}\in \Cl^{\times 0 n},$$
i.e. $T\in\Q^\prime$ and $T\notin\Q$. The element $T$ is invertible because
$$
(e+2e_{1\ldots n})(e-2e_{1\ldots n})=e-4(e_{1\ldots n})^2
$$
is a nonzero scalar.

We have (\ref{p2}) because the condition $\tilde{T}T\in \Cl^{\overline{0}}=\Cl^0\oplus \Cl^4$ holds automatically in the case $n=4$.

Let us prove (\ref{p3}). The element $T=e_{12}+2e_{34}\in\Cl^{(0)}$ is invertible
because
$$(e_{12}+2e_{34})(e_{12}-2e_{34})=(e_{12})^2-4(e_{34})^2$$
is a nonzero scalar. Also we have
$$\tilde{T} T=-(e_{12}+2e_{34})(e_{12}+2e_{34})=-(e_{12})^2-4(e_{34})^2-4e_{1234}\notin \Cl^{\times 0 n},$$
i.e. $T\in\P$ and $T\notin\Q^\prime$.

Let us prove (\ref{ABQprime}). If $n=1, 2, 3\mod 4$, then $\A^\prime=\A$, $\B^\prime=\B$, and $\Q^\prime=\Q$, thus $\Q^\prime=\Q=\A\cap\B=\A^\prime\cap\B^\prime$. Let us consider the case $n=0\mod 4$. Suppose we have an element $T\in \Cl^\times$ such that
$$
\tilde{T}T=W_1\in(\Cl^{0}\oplus \Cl^n)^\times,\qquad \hat{\tilde{T}} T=W_2\in(\Cl^{0}\oplus \Cl^n)^\times.
$$
We get $\hat{T}=W T$ for some $W\in(\Cl^{0}\oplus \Cl^n)^\times$. In the proof of Theorem \ref{th1}, we have already shown that this implies $T\in \Cl^{\times (0)}\cup \Cl^{\times (1)}$.
\end{proof}

\begin{thm}\label{th3} In the cases $n\geq 4$, we have
\begin{eqnarray}
&&\Q=\Gamma^{\overline{1}}=\Gamma^{\overline{3}}\neq
\Q^\prime=\Gamma^{\overline{0}}=\Gamma^{\overline{2}},\qquad n=0\mod 4,\\
&&\Q=\Gamma^{\overline{0}}=\Gamma^{\overline{1}}=
\Gamma^{\overline{2}}=\Gamma^{\overline{3}},\qquad n=1, 2, 3\mod 4.
\end{eqnarray}
In the exceptional cases, we have
\begin{eqnarray}
&&\Gamma^{\overline{0}}=\Cl^\times\neq \Gamma^{\overline{1}}=\Gamma^{\overline{2}}=\Q=\P= \Cl^{\times (0)}\cup \Cl^{\times (1)},\quad n=2,\nonumber\\
&&\Gamma^{\overline{0}}=\Gamma^{\overline{3}}=\Cl^\times\neq \Gamma^{\overline{1}}=\Gamma^{\overline{2}}=\Q=\P=\Z^\times \Cl^{\times (0)},\quad n=3.\nonumber
\end{eqnarray}
As a consequence, we obtain (the analogue of Theorem \ref{th2} for grades)
\begin{eqnarray}
&&\Gamma^{\overline{1}}\subseteq \Gamma^{\overline{k}},\quad k=0, 1, 2, 3,\qquad \bigcap_{k=0}^3\Gamma^{\overline{k}}=\Gamma^{\overline{1}},\quad \mbox{i.e.}\\
&&\Gamma^{\overline{1}}=\Q=\{T\in \Cl^\times:\quad T \Cl^{\overline{k}}T^{-1}\subseteq \Cl^{\overline{k}},\quad k=0, 1, 2, 3\}.\nonumber
\end{eqnarray}
\end{thm}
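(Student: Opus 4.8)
The plan is to prove each claimed equality by a pair of opposite inclusions, systematically exploiting the characterisation (\ref{quat}) of $\Cl^{\overline{k}}$ as $\{U:\hat U=(-1)^k U,\ \tilde U=(-1)^{k(k-1)/2}U\}$. For the inclusion $\Q\subseteq\Gamma^{\overline{k}}$: if $T\in\Q$, then $T\in\A\cap\P$ by Lemma \ref{lemQ}, so Theorem \ref{th1} gives $\hat T=W_1T$ with $W_1\in\Z^\times$ and $\tilde TT=W_2\in\Z^\times$; hence also $\tilde T=W_2T^{-1}$ and $\hat{\tilde T}=W_3T^{-1}$ with $W_3\in\Z^\times$. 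Since the $W_i$ are central, a one-line computation of $\widehat{TUT^{-1}}$ and $\widetilde{TUT^{-1}}$ for $U\in\Cl^{\overline{k}}$ reproduces exactly the two sign conditions of (\ref{quat}), whence $TUT^{-1}\in\Cl^{\overline{k}}$. The same computation applies to $T\in\Q^\prime$ and $k\in\{0,2\}$ when $n\equiv0\mod4$: now $W_i\in(\Cl^0\oplus\Cl^n)^\times$, but for even $n$ the pseudoscalar $e_{1\ldots n}$ commutes with all of $\Cl^{(0)}\supseteq\Cl^{\overline{0}},\Cl^{\overline{2}}$, so the $W_i$ still cancel.

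\emph{The reverse inclusions.} Let $T\in\Gamma^{\overline{k}}$ and $U\in\Cl^{\overline{k}}$. Expanding $\widehat{TUT^{-1}}=(-1)^kTUT^{-1}$ and $\widetilde{TUT^{-1}}=(-1)^{k(k-1)/2}TUT^{-1}$ and cancelling signs gives $\hat TU(\hat T)^{-1}=TUT^{-1}$ and $\widetilde{T^{-1}}U\tilde T=TUT^{-1}$, i.e. both $T^{-1}\hat T$ and $\tilde TT$ commute with every $U\in\Cl^{\overline{k}}$. The key step is to compute $C_k:=\{X\in\Cl:[X,U]=0\ \forall U\in\Cl^{\overline{k}}\}$. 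Since $\Cl^{\overline{1}}$ contains all the generators and, for $n\ge4$, $\Cl^{\overline{3}}$ contains $\Cl^3$, and grade-$1$ (resp. grade-$3$, $n\ge4$) elements generate the whole of $\Cl$, one has $C_1=C_3=\Z$; since, for $n\ge5$, $\Cl^{\overline{0}}$ contains $\Cl^4$ and $\Cl^{\overline{2}}$ contains $\Cl^2$, and grade-$2$ (resp. grade-$4$, $n\ge5$) elements generate the even subalgebra $\Cl^{(0)}$, the argument of Lemma \ref{lemmacenter2}, together with the fact that $e_{1\ldots n}$ commutes with $\Cl^{(0)}$ for even $n$ and is central for odd $n$, gives $C_0=C_2=\Cl^0\oplus\Cl^n$ (which is $\Z$ for odd $n$). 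Granting this: from $T^{-1}\hat T\in C_k^\times$ (equivalently $\hat T=TV$ with $V\in C_k^\times$) the case analysis in the proof of Theorem \ref{th1} yields $T\in\P$; and from $\tilde TT\in C_k^\times$ together with $\tilde TT\in\Cl^{\overline{01}}$ (the Lemma of Section \ref{sectA}) one gets $\tilde TT\in(\Cl^0\oplus\Cl^n)^\times\cap\Cl^{\overline{01}}$, which lies in $\Z^\times$ unless $n\equiv0\mod4$ (where $\Cl^n\subseteq\Cl^{\overline{0}}\subseteq\Cl^{\overline{01}}$, so only $\tilde TT\in(\Cl^0\oplus\Cl^n)^\times$ follows). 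Hence $T\in\A\cap\P=\Q$ for $k=1,3$ and for $k=0,2$ with $n\not\equiv0\mod4$, whereas $T\in\A^\prime\cap\P=\Q^\prime$ for $k=0,2$ with $n\equiv0\mod4$; combined with $\Q\subseteq\Q^\prime$ and $\Q\neq\Q^\prime$ (Lemma \ref{lemQQ}) this gives $\Gamma^{\overline{1}}=\Gamma^{\overline{3}}=\Q\neq\Q^\prime=\Gamma^{\overline{0}}=\Gamma^{\overline{2}}$ for $n\equiv0\mod4$.

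\emph{Exceptional cases and the consequence.} For $n\le3$ one has $\Cl^{\overline{0}}=\Cl^0$ and $\Cl^{\overline{3}}$ zero or central, so $\Gamma^{\overline{0}}=\Gamma^{\overline{3}}=\Cl^\times$, while $\Cl^{\overline{1}}=\Cl^1$, $\Cl^{\overline{2}}=\Cl^2$, so the reverse-inclusion argument (valid here, as $C_1=\Z$ and $C_2=\Cl^0\oplus\Cl^n$) combined with Lemma \ref{lemmagammak}, Lemma \ref{lemQ} ($\Q=\P$ for $n\le3$) and Theorem \ref{th1} gives $\Gamma^{\overline{1}}=\Gamma^{\overline{2}}=\Q=\P$; for $k=0$ and $n=4$, where $\Cl^{\overline{0}}=\Cl^0\oplus\Cl^n$, the identity $Te_{1\ldots n}T^{-1}=e_{1\ldots n}\,\hat TT^{-1}$ together with Theorem \ref{th1} and $\Q^\prime=\P$ for $n=4$ (Lemma \ref{lemQQ}) gives $\Gamma^{\overline{0}}=\P=\Q^\prime$. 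Finally, since $\Gamma^{\overline{1}}=\Q\subseteq\Gamma^{\overline{k}}$ for every $k$ by the first part, $\bigcap_{k=0}^3\Gamma^{\overline{k}}=\Gamma^{\overline{1}}$.

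The main obstacle throughout is the commutant computation $C_k=\{X:[X,\Cl^{\overline{k}}]=0\}$: one must show that the lowest homogeneous component occurring in $\Cl^{\overline{k}}$ already generates $\Cl$ (for $k=1,3$) or $\Cl^{(0)}$ (for $k=0,2$, and then invoke Lemma \ref{lemmacenter2}), and one must keep careful track of whether the pseudoscalar $e_{1\ldots n}$ lies in $\Cl^{\overline{k}}$ and commutes with it — precisely the phenomenon that makes the answer switch between $\Q$ and $\Q^\prime$ when $n\equiv0\mod4$, and that is responsible for the small-$n$ exceptions.
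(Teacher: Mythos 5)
Your proposal is correct, and its skeleton (forward inclusions via the sign conditions (\ref{quat}), reverse inclusions via commutants of $\Cl^{\overline{k}}$ computed by writing generators as products of grade-$3$ elements and grade-$2$ elements as products of grade-$4$ elements, plus Lemma \ref{lemmacenter2}, plus the intersection lemmas \ref{lemQ} and \ref{lemQQ}) is the same as the paper's. The genuine deviation is in how you land in $\Q$ (resp.\ $\Q^\prime$) in the reverse direction: the paper extracts \emph{two} norm conditions, $\tilde TT$ and $\hat{\tilde T}T$ commuting with $\Cl^{\overline{k}}$, and invokes $\A\cap\B=\Q$ (resp.\ $\A^\prime\cap\B^\prime=\Q^\prime$), so the parity membership comes for free; you instead use the grade-involution condition $T^{-1}\hat T\in C_k^\times$ to get $T\in\P$ via the case analysis of Theorem \ref{th1}, and only one norm condition $\tilde TT\in C_k$, using $\Q=\A\cap\P$ (resp.\ $\Q^\prime=\A^\prime\cap\P$, which is $\Q^\prime$'s definition). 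This buys you one fewer norm-function computation, at the cost of having to re-run Theorem \ref{th1}'s argument with the factor on the right, $\hat T=TV$, $V\in(\Cl^0\oplus\Cl^n)^\times$, rather than $\hat T=WT$; this is harmless because $(T^{-1}\hat T)^{-1}=\widehat{T^{-1}}T$ and $\Cl^0\oplus\Cl^n$ is closed under inversion (or because the even/odd splitting argument works verbatim with right multiplication), but it is worth one explicit line. Your treatment of the case $n=4$, $k=0$ via the identity $Te_{1\ldots n}T^{-1}=e_{1\ldots n}\hat TT^{-1}$, giving $\Gamma^{\overline{0}}=\P=\Q^\prime$ directly, is slicker than the paper's route (which derives $\tilde TT,\hat{\tilde T}T\in\Cl^{0}\oplus\Cl^{4}$ from the commutation relation with $\Cl^{\overline{0}}$ and $\tilde TT\in\Cl^{\overline{01}}$). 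Like the paper, you leave the easy strict inequalities in the exceptional cases $n=2,3$ (an invertible element of mixed parity) implicit, and you correctly import $\Q\neq\Q^\prime$ for $n\equiv0\bmod4$ from Lemma \ref{lemQQ}; no gap there.
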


\begin{proof} Let us prove that $\Q\subseteq\Gamma^{\overline{k}}$, $k=0, 1, 2, 3$. Suppose $T\in\Z^\times(\Cl^{\times (0)}\cup \Cl^{\times (1)})$ and $\tilde{T} T=W\in \Z^\times$. For an arbitrary $U_{\overline{k}}\in \Cl^{\overline k}$, $\overline{k}=\overline{0}, \overline{1}, \overline{2}, \overline{3}$, we have
\begin{eqnarray}
&&\widetilde{T U_{\overline{k}} T^{-1}}=\widetilde{T^{-1}} \widetilde{U_{\overline{k}}} \tilde{T} =W^{-1} T \widetilde{U_{\overline{k}}} T^{-1}W=T \widetilde{U_{\overline{k}}} T^{-1},\nonumber\\
&&\widehat{TU_{\overline{k}} T^{-1}}=\hat{T} \widehat{U_{\overline{k}}} \widehat{T^{-1}} =T \widehat{U_{\overline{k}}} T^{-1}.\nonumber
\end{eqnarray}
Using (\ref{quat}), we conclude that the elements $T U_{\overline{k}} T^{-1}\in\Cl^{\overline k}$. Thus $\Q\subseteq\Gamma^{\overline{k}}$, $k=0, 1, 2, 3$.

Let us prove that in the cases $n=0\mod 4$, we have $\Q^\prime\subseteq \Gamma^{\overline{0}}$, $\Q^\prime\subseteq \Gamma^{\overline{2}}$. Suppose $T\in \Cl^{\times (0)}\cup \Cl^{\times (1)}$ and $\tilde{T} T=W\in (\Cl^0\oplus \Cl^n)^\times$. For an arbitrary $U_{\overline{k}}\in \Cl^{\overline k}$, we have
\begin{eqnarray}
&&\widetilde{T U_{\overline{k}} T^{-1}}=\widetilde{T^{-1}} \widetilde{U_{\overline{k}}} \tilde{T} = T W^{-1} \widetilde{U_{\overline{k}}}  W T^{-1}=T \widetilde{U_{\overline{k}}} T^{-1},\qquad k=0, 2,\nonumber\\
&&\widehat{TU_{\overline{k}} T^{-1}}=\hat{T} \widehat{U_{\overline{k}}} \widehat{T^{-1}} =T \widehat{U_{\overline{k}}} T^{-1},\qquad k=0, 1, 2, 3,\nonumber
\end{eqnarray}
where we use that $W$ commutes with even elements. Using (\ref{quat}), we conclude that $T U_{\overline{k}} T^{-1}\in\Cl^{\overline k}$ for $k=0$ and $2$. Thus $\Q^\prime\subseteq\Gamma^{\overline{k}}$, $k=0, 2$.

%Using Theorem \ref{th2}, we get $\Gamma_{p,q}\subseteq \Q_{p,q}$. Using (\ref{5.23}), we obtain $T\in \Z^\times (\cl^{\times (0)}_{p,q}\cup\cl^{\times (1)}_{p,q})$.

Let us prove that $\Gamma^{\overline{1}}\subseteq \Q$. Suppose $T \Cl^{\overline 1}T^{-1}\subseteq \Cl^{\overline 1}$. From $T e_a T^{-1}\in \Cl^{\overline 1}$, $a=1, \ldots, n$, we get
$$(\tilde{T})^{-1}e_a \tilde{T}=\widetilde{(T e_a T^{-1})}= Te_a T^{-1}.$$
Multiplying both sides on the right by $T$ and on the left by $\tilde{T}$, we get
$$e_a (\tilde{T} T)=(\tilde{T} T) e_a,\qquad a=1, \ldots, n,$$
i.e. $\tilde{T} T\in \Z^\times$, and $\Gamma^{\overline{1}}\subseteq \A$.
Also we have
$$-(\hat{\tilde{T}})^{-1}e_a \hat{\tilde{T}}=\widehat{\widetilde{(T e_a T^{-1})}}= -Te_a T^{-1}.$$
Multiplying both sides on the right by $T$ and on the left by $\hat{\tilde{T}}$, we get
$$e_a (\hat{\tilde{T}} T)=(\hat{\tilde{T}} T) e_a,\qquad a=1, \ldots, n,$$
i.e. $\hat{\tilde{T}} T\in \Z^\times$, and $\Gamma^{\overline{1}}\subseteq \B$.
Using Lemma \ref{lemQ}, we obtain $\Gamma^{\overline{1}}\subseteq \A\cap\B=\Q$.

In the exceptional case $n=3$, we have $\Gamma^{\overline{3}}=\Cl^\times$ because $\Cl^{\overline{3}}=\Cl^3\in\Z$ in this case.

Let us prove that $\Gamma^{\overline{3}}\subseteq \Q$, $n\geq 4$. Suppose $T \Cl^{\overline 3}T^{-1}\subseteq \Cl^{\overline 3}$. We obtain
$$-(\tilde{T})^{-1} U\tilde{T}=\widetilde{(T U T^{-1})}= -T U T^{-1},\qquad \forall U\in \Cl^{\overline 3},$$
and
$$U (\tilde{T} T)=(\tilde{T} T) U,\qquad \forall U\in \Cl^{\overline 3}.$$
If $n\geq 4$, then we can always represent each generator $e_a$, $a=1,\ldots, n$, as the product of 3 elements from the subspace $\Cl^{\overline 3}$. This implies that $\tilde{T}T$ commutes with all generators $e_a$, $a=1, \ldots, n $. For example, if $\tilde{T}T$ commutes with $e_{123}$, $e_{124}$, $e_{134}$, then it is trivial that it commutes with $e_1$. We obtain $\tilde{T} T\in \Z^\times$, i.e. $\Gamma^{\overline{3}}\subseteq \A$. Analogously, we get
$$(\hat{\tilde{T}})^{-1} U\hat{\tilde{T}}=\widehat{\widetilde{(T U T^{-1})}}= T U T^{-1},\qquad \forall U\in \Cl^{\overline 3},$$
and
$$U (\hat{\tilde{T}} T)=(\hat{\tilde{T}} T) U,\qquad \forall U\in \Cl^{\overline 3}.$$
Finally, for $n\geq 4$, we obtain $\hat{\tilde{T}}T\in\Z^\times$, and $\Gamma^{\overline{3}}\subseteq \B$. Thus $\Gamma^{\overline{3}}\subseteq \A\cap\B=\Q$ in the cases $n\geq 4$.

Let us prove that $\Gamma^{\overline{2}}\subseteq \Q$ in the cases $n=1, 2, 3\mod 4$ and $\Gamma^{\overline{2}}\subseteq \Q^\prime$ in the case $n=0\mod 4$. Suppose $T \Cl^{\overline 2}T^{-1}\subseteq \Cl^{\overline 2}$. We have
$$-(\tilde{T})^{-1} U\tilde{T}=\widetilde{(T U T^{-1})}= -T U T^{-1},\qquad \forall U\in \Cl^{\overline 2}.$$
Therefore
$$U (\tilde{T} T)=(\tilde{T} T) U,\qquad \forall U\in \Cl^{\overline 2}.$$
This implies
$$U (\tilde{T} T)=(\tilde{T} T) U,\qquad \forall U\in \Cl^{(0)}$$
because we can always ($n\geq 2$) represent all even basis elements of Clifford algebra as the products of elements from the subspace $\Cl^{\overline 2}$. Using Lemma \ref{lemmacenter2}, we get $\tilde{T} T\in \Cl^0\oplus \Cl^n$.  Analogously,
$$-(\hat{\tilde{T}})^{-1} U\hat{\tilde{T}}=\widehat{\widetilde{(T U T^{-1})}}= -T U T^{-1},\qquad \forall U\in \Cl^{\overline 2}.$$
Therefore
$$U (\hat{\tilde{T}} T)=(\hat{\tilde{T}} T) U,\qquad \forall U\in \Cl^{\overline 2}.$$
This implies
$$U (\hat{\tilde{T}} T)=(\hat{\tilde{T}} T) U,\qquad \forall U\in \Cl^{(0)}.$$
Using Lemma \ref{lemmacenter2}, we obtain $\hat{\tilde{T}} T\in \Cl^0\oplus \Cl^n$. We have $\tilde{T}T\in \Cl^{\overline{01}}$, $\hat{\tilde{T}}T\in \Cl^{\overline{03}}$. Thus in the cases $n=1, 2, 3\mod 4$, we get $\tilde{T} T\in \Cl^0$ and $\hat{\tilde{T}} T\in \Cl^0$. We obtain the group $\A\cap\B=\Q$ in this case. In the case $n=0\mod 4$, we obtain the group $\A^\prime\cap\B^\prime=\Q^\prime$.

In the exceptional cases $n=2, 3$, we have $\Gamma^{\overline{0}}=\Gamma^{0}=\Cl^\times$ because $\Cl^{\overline{0}}=\Cl^{0}\in\Z$ in this case.

Let us prove that $\Gamma^{\overline{0}}\subseteq \Q$ in the cases $n=1, 2, 3\mod 4$, $n\geq 5$, and $\Gamma^{\overline{0}}\subseteq \Q^\prime$ in the cases $n=0\mod 4$, $n\geq 4$. Suppose $T \Cl^{\overline 0}T^{-1}\subseteq \Cl^{\overline 0}$. We have
$$(\tilde{T})^{-1} U\tilde{T}=\widetilde{(T U T^{-1})}= T U T^{-1},\qquad \forall U\in \Cl^{\overline 0}.$$
Therefore
\begin{equation}
U (\tilde{T} T)=(\tilde{T} T) U,\qquad \forall U\in \Cl^{\overline 0}.\label{54}
\end{equation}
In the cases $n\geq 5$, this implies
$$U (\tilde{T} T)=(\tilde{T} T) U,\qquad \forall U\in \Cl^{(0)}$$
because we can always represent each basis element of grade 2 as the product of elements from the subspace $\Cl^{\overline 0}$. For example, if $\tilde{T}T$ commutes with $e_{1345}$ and $e_{2345}$, then it commutes with $e_{12}$. Using Lemma \ref{lemmacenter2}, we get
\begin{equation}
\tilde{T} T\in \Cl^0\oplus \Cl^n.\label{55}
\end{equation}
If $n=4$, then $\Cl^{\overline 0}=\Cl^0\oplus \Cl^4$ and, from (\ref{54}), we get $\tilde{T}T\in \Cl^{(0)}=\Cl^{024}$. Using $\tilde{T}T\in \Cl^{\overline{0}}$, we get $\tilde{T}T\in \Cl^{04}$, i.e. (\ref{55}) again. Analogously, we obtain
$$(\hat{\tilde{T}})^{-1} U\hat{\tilde{T}}=\widehat{\widetilde{(T U T^{-1})}}= T U T^{-1},\qquad \forall U\in \Cl^{\overline 0}.$$
Therefore
\begin{equation}
U (\hat{\tilde{T}} T)=(\hat{\tilde{T}} T) U,\qquad \forall U\in \Cl^{\overline 0}.\label{57}
\end{equation}
In the cases $n\geq 5$, this implies
$$U (\hat{\tilde{T}} T)=(\hat{\tilde{T}} T) U,\qquad \forall U\in \Cl^{(0)}.$$
Using Lemma \ref{lemmacenter2}, we get
\begin{equation}
\hat{\tilde{T}} T\in \Cl^0\oplus \Cl^n.\label{56}
\end{equation}
In the case $n=4$, from (\ref{57}), we get again (\ref{56}). Finally, in the cases $n=1, 2, 3\mod 4$, for $n\geq 4$, using $\tilde{T}T\in \Cl^{\overline{01}}$, $\hat{\tilde{T}}T\in \Cl^{\overline{03}}$, we get from (\ref{55}) and (\ref{56}) the conditions $\tilde{T}T\in \Cl^0$ and $\hat{\tilde{T}} T\in \Cl^0$. We obtain the group $\A\cap\B=\Q$ in this case. In the cases $n=0\mod 4$, $n\geq 4$, from the conditions (\ref{55}) and (\ref{56}) we obtain the group $\A^\prime\cap\B^\prime=\Q^\prime$.
\end{proof}

Let us give one example for the case $n=4$, $p=1$, $q=3$. The element $T=e+e_{1234}\in \Gamma^{\overline{2}}=\Q^\prime$ and $T\notin \Gamma^{\overline{1}}=\Q$. We have $(e_{1234})^2=-e$ and $T^{-1}=\frac{1}{2}(e-e_{1234})$. Thus
$$T e_1 T^{-1}=\frac{1}{2}(e+e_{1234})e_1 (e-e_{1234})=-e_{234}\notin \Cl^{\overline{1}},$$
and for an arbitrary $U\in \Cl^{\overline{2}}$
\begin{eqnarray}
T U T^{-1}&=&\frac{1}{2}(e+e_{1234})U (e-e_{1234})\nonumber\\
&=&\frac{1}{2}(U -U e_{1234}+e_{1234}U - e_{1234}U e_{1234})=U \in \Cl^{\overline{2}}.\nonumber
\end{eqnarray}
We get $\Q=\Gamma^{\overline{1}}\neq\Gamma^{\overline{2}}=\Q^\prime$ in this case.

\section{The relations between groups}\label{sectRelat}

In this section, we prove some new properties of the groups $\Gamma^k$, $\Q$, $\Q^\prime$ and $\A$. In particular, we show that these groups are closely related in the cases of small dimensions.

\begin{lem}\label{lemmagammakq} We have
\begin{eqnarray}
\Gamma^k&\subseteq& \Q,\qquad k=1, 2, 3, \ldots, n-1,\qquad n=1, 2, 3\mod 4,\label{yyy7}\\
\Gamma^k&\subseteq& \Q,\qquad k=1, 3, 5, \ldots, n-1,\qquad n=0\mod 4,\label{yyy8}\\
\Gamma^k&\subseteq& \Q^\prime,\qquad k=2, 4, 6, \ldots, n-2,\qquad n=0\mod 4.\label{yyy9}
\end{eqnarray}
\end{lem}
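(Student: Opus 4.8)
The plan is to convert the condition $T\in\Gamma^k$ into control over the two norm functions $\tilde TT$ and $\hat{\tilde T}T$, and then to read off the conclusion from $\A\cap\B=\Q$ (Lemma~\ref{lemQ}) and $\A^\prime\cap\B^\prime=\Q^\prime$ (formula~(\ref{ABQprime})). The heart of the argument is a description of the centralizer of $\Cl^k$ in $\Cl$ for $1\le k\le n-1$. I claim that for such $k$ the unital subalgebra of $\Cl$ generated by $\Cl^k$ contains $\Cl^{(0)}$, and contains all of $\Cl$ when $k$ is odd. Indeed, given $a<b$, pick a $(k-1)$-subset $S\subseteq\{1,\dots,n\}\setminus\{a,b\}$ (possible since $k-1\le n-2$) and set $A=S\cup\{a\}$, $B=S\cup\{b\}$; then $e_Ae_B$ is a nonzero scalar multiple of $e_{ab}$, and the elements $e_{ab}$ generate $\Cl^{(0)}$. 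If $k$ is odd and $k\ge3$, multiplying a blade $e_{a_1\cdots a_k}$ on the right by the now-available element $e_{a_2a_3}$ gives a nonzero scalar multiple of $e_{a_1a_4\cdots a_k}$, and iterating $(k-1)/2$ times produces a nonzero multiple of $e_{a_1}$; thus $\Cl^1$ also lies in the generated subalgebra. Hence, if $X\in\Cl$ commutes with every element of $\Cl^k$, then it commutes with $\Cl^{(0)}$, so $X\in\Cl^0\oplus\Cl^n$ by Lemma~\ref{lemmacenter2}; and if in addition $k$ is odd, then $X$ commutes with all generators, so $X\in\Z$, which equals $\Cl^{\times0}$ when $n$ is even. (Alternatively one computes this centralizer directly from $e_Ae_B=(-1)^{|A||B|+|A\cap B|}e_Be_A$.)

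Next I would carry out the reduction. Let $T\in\Gamma^k$ and $U\in\Cl^k$. The reversion and the operation $\hat{\tilde{\quad}}$ act on $\Cl^k$ as the scalars $(-1)^{k(k-1)/2}$ and $(-1)^{k(k+1)/2}$; applying them to $TUT^{-1}\in\Cl^k$ and cancelling these scalars yields
\[TUT^{-1}=(\tilde T)^{-1}U\tilde T,\qquad TUT^{-1}=(\hat{\tilde T})^{-1}U\hat{\tilde T}.\]
Multiplying the first identity on the left by $\tilde T$ and on the right by $T$, and the second by $\hat{\tilde T}$ and $T$ respectively, gives $(\tilde TT)U=U(\tilde TT)$ and $(\hat{\tilde T}T)U=U(\hat{\tilde T}T)$ for all $U\in\Cl^k$. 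Both $\tilde TT$ and $\hat{\tilde T}T$ are invertible, so by the centralizer computation they lie in $(\Cl^0\oplus\Cl^n)^\times$ in general, and in $\Z^\times=\Cl^{\times0}$ whenever $n$ is odd, or $n$ is even and $k$ is odd.

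It then remains to collect the cases. If $\tilde TT,\hat{\tilde T}T\in\Z^\times$, then $T\in\A\cap\B=\Q$ by Lemma~\ref{lemQ}; by the previous paragraph this occurs for every $k=1,\dots,n-1$ when $n$ is odd, and for odd $k$ when $n$ is even --- this gives (\ref{yyy7}) in the cases $n=1,3\mod4$ and gives (\ref{yyy8}). If instead $\tilde TT,\hat{\tilde T}T\in(\Cl^0\oplus\Cl^n)^\times$, then $T\in\A^\prime\cap\B^\prime=\Q^\prime$ by (\ref{ABQprime}); this covers even $k$ with $n$ even. For $n=2\mod4$ we have $\Q^\prime=\Q$, which completes (\ref{yyy7}); for $n=0\mod4$ we obtain (\ref{yyy9}), noting that even $k$ in the range $1\le k\le n-1$ means $2\le k\le n-2$.

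The main obstacle is the centralizer step: one must verify that $\Cl^k$ generates a large enough subalgebra of $\Cl$, and this is precisely where the hypothesis $1\le k\le n-1$ is essential --- for $k=n$ the subspace $\Cl^n$ sits inside $\Z$ (or commutes with all of $\Cl^{(0)}$), so $\Gamma^n$ is much larger and the inclusions fail, compare Lemma~\ref{lemmagammak}. Once the centralizer is pinned down, the remaining work is a routine manipulation of the involutions together with bookkeeping over the residue of $n$ modulo $4$.
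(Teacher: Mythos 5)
Your proposal is correct and follows essentially the same route as the paper: from $T\Cl^kT^{-1}\subseteq\Cl^k$ you derive $(\tilde TT)U=U(\tilde TT)$ and $(\hat{\tilde T}T)U=U(\hat{\tilde T}T)$ for all $U\in\Cl^k$, pin down the centralizer of $\Cl^k$ via products of grade-$k$ blades (your explicit $e_{S\cup\{a\}}e_{S\cup\{b\}}$ construction just makes the paper's ``we can always represent'' claims concrete) together with Lemma~\ref{lemmacenter2}, and conclude via $\A\cap\B=\Q$ and $\A^\prime\cap\B^\prime=\Q^\prime$. The only blemish is the stray assertion $\Z^\times=\Cl^{\times 0}$, which holds only for even $n$ (for odd $n$ one has $\Z^\times=(\Cl^0\oplus\Cl^n)^\times$, which is all you actually need there), so the argument is unaffected.
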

Below (see Theorem \ref{thCap} and Lemma \ref{lemmagammakn}) we prove that we have equality in (\ref{yyy7}), (\ref{yyy8}), and (\ref{yyy9}) in the cases $n\leq 5$.
\begin{proof}
Let us have $TU_kT^{-1}\subseteq \Cl^k$ for any $U_k\in\Cl^k$ with some fixed $k$. We get
$$\pm TU_kT^{-1}=\widetilde{TU_k T^{-1}}=\pm \widetilde{T^{-1}} U_k \widetilde{T},\qquad \forall U_k\in\Cl^k.$$
Multiplying both sides by $T$ on the right and by $\tilde{T}$ on the left, we get
$$ U_k(\tilde{T}T)=(\tilde{T}T)U_k,\qquad \forall U_k\in\Cl^k.$$
If $k$ is odd and $k\neq n$, then we can always represent each generator $e_a$, $a=1, \ldots, n$ as the product of elements of grade $k$. We obtain
$$ e_a(\tilde{T}T)=(\tilde{T}T)e_a,\qquad a=1, \ldots, n,$$
i.e. $\tilde{T}T\in\Z^\times$.

If $k$ is even and $k\neq n$, $k\neq 0$, then we can always represent each basis element of grade 2 as the product of elements of grade $k$. We obtain
$$ e_{ab}(\tilde{T}T)=(\tilde{T}T)e_{ab},\qquad \forall a<b.$$
Using Lemma \ref{lemmacenter2}, we get $\tilde{T}T\in\Cl^0\oplus\Cl^n$, which is equivalent to $\tilde{T}T\in\Cl^0$ in the cases $n=1, 2, 3\mod 4$.

Also we have
$$\pm TU_k T^{-1}=\widehat{\widetilde{TU_k T^{-1}}}=\pm \widehat{\widetilde{T^{-1}}} U_k \hat{\tilde{T}},\qquad \forall U_k\in\Cl^k.$$
Multiplying both sides by $T$ on the right and by $\hat{\tilde{T}}$ on the left, we get
$$ U_k(\hat{\tilde{T}}T)=(\hat{\tilde{T}}T)U_k,\qquad \forall U_k\in\Cl^k.$$
If $k$ is odd and $k\neq n$, then we obtain $\hat{\tilde{T}}T\in\Z^\times$.  If $k$ is even and $k\neq n$, $k\neq 0$, then we obtain $\hat{\tilde{T}}T\in\Cl^0\oplus\Cl^n$, which is equivalent to $\hat{\tilde{T}}T\in\Cl^0$ in the cases $n=1, 2, 3\mod 4$.

Finally, in the cases $n=1, 2, 3\mod 4$, we obtain the group $\Q=\A\cap\B$. In the case $n=0\mod 4$, we obtain the group $\Q=\A\cap\B$ for odd $k$ and the group $\Q^\prime=\A^\prime\cap \B^\prime$ for even $k$.
\end{proof}

Using previous Lemma, we conclude that $\Gamma^k\subseteq \P$ because $\Q\subseteq\P$, $\Q^\prime\subseteq\P$ (see Lemmas \ref{lemQ} and \ref{lemQQ}). Thus we can rewrite the definitions of the groups $\Gamma^k$ (\ref{gammak}) in the form
$$
\Gamma^k:=\{T\in\Z^\times(\Cl^{\times (0)}\cup\Cl^{\times (1)}):\, T \Cl^{k}T^{-1}\subseteq\Cl^{k}\}
,\quad k=1, \ldots, n-1.
$$

\begin{lem}  \label{lemmagammaknk} We have
$$
\Gamma^k=\Gamma^{n-k},\quad k=1, \ldots, n-1.
$$
\end{lem}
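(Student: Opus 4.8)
The claim is that conjugation by an invertible $T$ preserves the grade-$k$ subspace if and only if it preserves the grade-$(n-k)$ subspace. The natural bridge between grades $k$ and $n-k$ is multiplication by the pseudoscalar $e_{1\ldots n}$, since for any basis blade $e_{a_1\ldots a_k}$ of grade $k$ the product with $e_{1\ldots n}$ is (up to a nonzero scalar) a basis blade of grade $n-k$. More precisely, one checks that the linear map $U\mapsto U\,e_{1\ldots n}$ sends $\Cl^k$ bijectively onto $\Cl^{n-k}$; indeed $e_{a_1\ldots a_k}e_{1\ldots n}=\pm e_{b_1\ldots b_{n-k}}$ where $\{b_1,\ldots,b_{n-k\}}$ is the complement of $\{a_1,\ldots,a_k\}$ in $\{1,\ldots,n\}$, and this is nonzero because $(e_{1\ldots n})^2=\pm e$ (resp. $\pm e$ over $\C$), so right multiplication by $e_{1\ldots n}$ is invertible. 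Thus $\Cl^{n-k}=\Cl^k\,e_{1\ldots n}$ as subspaces.

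First I would establish this identity $\Cl^{n-k}=\Cl^{k}e_{1\ldots n}=e_{1\ldots n}\Cl^{k}$ (the left and right versions agreeing up to signs because $e_{1\ldots n}$ is central for odd $n$ and its conjugation is $\pm$ the grade involution for even $n$). Second, by Lemma~\ref{lemmagammakq} together with the remark following it, I may assume $T\in\Z^\times(\Cl^{\times(0)}\cup\Cl^{\times(1)})$, so $T$ is a scalar-multiple-of-central times a homogeneous-parity element; in particular $T$ either commutes or anticommutes with $e_{1\ldots n}$, i.e. $Te_{1\ldots n}=\epsilon\, e_{1\ldots n}T$ for some $\epsilon=\pm1$ (with $\epsilon=+1$ automatically when $n$ is odd). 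Then for $U_k\in\Cl^k$,
$$
T(U_k e_{1\ldots n})T^{-1}=\epsilon\,(TU_kT^{-1})\,e_{1\ldots n},
$$
so $T$ conjugates $U_ke_{1\ldots n}\in\Cl^{n-k}$ into $\Cl^{n-k}$ exactly when it conjugates $U_k$ into $\Cl^k$. Since $U_k$ ranges over all of $\Cl^k$ as $U_ke_{1\ldots n}$ ranges over all of $\Cl^{n-k}$, this gives $\Gamma^k\subseteq\Gamma^{n-k}$, and the reverse inclusion follows by symmetry (replacing $k$ by $n-k$), yielding equality.

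The only mild subtlety — and the step I would be most careful about — is the reduction guaranteeing that elements of $\Gamma^k$ lie in $\Z^\times(\Cl^{\times(0)}\cup\Cl^{\times(1)})$, so that $T$ is parity-homogeneous up to a central factor and hence $(\epsilon)$-commutes with the pseudoscalar; this is exactly what the sharpened form of $\Gamma^k$ recorded just before the lemma provides, valid for $k=1,\ldots,n-1$. One should also note the endpoint cases are excluded ($k=0$ and $k=n$ are handled separately in Lemma~\ref{lemmagammak}), and that the argument is uniform in the real and complex cases since only $(e_{1\ldots n})^2\ne 0$ and the (anti)commutation of $e_{1\ldots n}$ with homogeneous-parity elements are used. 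No delicate case analysis on $n\bmod 4$ is needed here — that refinement enters only in identifying $\Q$ versus $\Q^\prime$, not in the present symmetry statement.
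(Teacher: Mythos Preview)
Your proof is correct and follows essentially the same approach as the paper: both use the bijection $\Cl^k\to\Cl^{n-k}$ given by multiplication by the pseudoscalar $e_{1\ldots n}$, invoking centrality of $e_{1\ldots n}$ for odd $n$ and, for even $n$, the reduction (from Lemma~\ref{lemmagammakq} and the remark after it) that $T$ is parity-homogeneous and hence $(\pm)$-commutes with $e_{1\ldots n}$. Your unified treatment via the sign $\epsilon$ is a minor cosmetic difference from the paper's explicit case split on the parity of $n$.
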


\begin{proof} If $n$ is odd, then $T \Cl^k T^{-1}\subseteq \Cl^k$ is equivalent to $T \Cl^{n-k} T^{-1}\subseteq \Cl^{n-k}$, $k=1, \ldots, n-1$, because we can multiply both sides of this conditions by $e_{1\ldots n}\in\Z$ and use $\Cl^k e_{1\ldots n}=\Cl^{n-k}$.

If $n$ is even, then $T \Cl^k T^{-1}\subseteq \Cl^k$ is also equivalent to $T \Cl^{n-k}T^{-1}\subseteq \Cl^{n-k}$, $k=1, \ldots, n-1$. We can multiply both sides of this conditions by $e_{1\ldots n}$ and use that $T\in\Cl^{\times (0)}\cup\Cl^{\times (1)}$ (see the note before this Lemma), use that $e_{1\ldots n}$ commutes with all even elements and anticommutes with all odd elements, and $\Cl^k e_{1\ldots n}=\Cl^{n-k}$.
\end{proof}

\begin{thm}\label{thCap}
We have
\begin{eqnarray}
\Gamma=\Q,\qquad n\leq 5;\qquad \Gamma\neq \Q,\qquad n=6.\label{yyy6}
\end{eqnarray}
\end{thm}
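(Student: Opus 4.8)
The plan is to prove the two assertions in \eqref{yyy6} separately, using the groups $\Q^k$ machinery already in place. For the inclusion $\Q\subseteq\Gamma$ when $n\leq 5$, recall from Theorem \ref{th3} and Lemma \ref{lemQ} that $\Q=\Gamma^{\overline{1}}=\A\cap\B$ and from Theorem \ref{th2} that $\Gamma=\bigcap_{k=0}^n\Gamma^k$. Since $\Gamma\subseteq\Q$ always holds (from $\Gamma\subseteq\Gamma^{\overline{k}}$, which follows from the computation in the proof of Theorem \ref{th2} showing $v_jU_kv_j^{-1}\in\Cl^{\overline{k}}$, or directly from $\Gamma\subseteq\A\cap\B$), the content is the reverse inclusion $\Q\subseteq\Gamma^k$ for every $k$. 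So I would take $T\in\Q=\Gamma^{\overline{1}}$ and show $T\Cl^k T^{-1}\subseteq\Cl^k$ for each $k=1,\ldots,n$ when $n\leq 5$. Since $T$ already preserves each $\Cl^{\overline{k}}$ (Theorem \ref{th3}) and, being in $\P$ (Lemma \ref{lemQ}, $\Q\subseteq\P$), it preserves parity subspaces too, the image $T\Cl^k T^{-1}$ lands in a subspace whose grades are constrained modulo $4$ and modulo $2$; in low dimension these constraints pin down a single grade. Concretely, $T e_a T^{-1}\in\Cl^{\overline{1}}\cap\Cl^{(1)}=\Cl^1\oplus\Cl^5$; for $n\leq 5$ one must still rule out the $\Cl^5$ component, but that follows since $\widetilde{Te_aT^{-1}}=Te_aT^{-1}$ already (from $\tilde{T}T\in\Z^\times$) and, combined with $T$ preserving $\Cl^{\overline{3}}$ via products, forces $Te_aT^{-1}\in\Cl^1$; once the generators go to grade $1$, $T\in\Gamma$ by \eqref{2.2}. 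Running the same parity-plus-mod-$4$ bookkeeping for grades $2,3,4,5$ (using Lemma \ref{lemmagammaknk} to reduce $\Gamma^4=\Gamma^{n-4}$, $\Gamma^5=\Gamma^{n-5}$ when possible, and Lemma \ref{lemmagammakq} which already gives $\Gamma^k\subseteq\Q$) closes the case $n\leq 5$.

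For the strict inequality $\Gamma\neq\Q$ at $n=6$, I would exhibit an explicit element of $\Q\setminus\Gamma$. The natural candidate is a product of two commuting grade-$2$ ``rotors'' that fails to preserve $\Cl^1$ but still satisfies $\tilde{T}T\in\Z^\times$ and $T\in\Cl^{(0)}$, e.g.\ something like $T=e_{12}+2e_{34}$ (which appeared in the proof of Lemma \ref{lemQ} for $n=4$) reconsidered inside $\cl$ with $n=6$, or more safely a genuinely invertible product such as $T=(e+e_{12})(e+e_{3456})$ or $T=e+e_{123456}$ adjusted so that $\tilde{T}T$ is central. The verification has three parts: (i) $T$ is invertible — check that $T\bar T$ or $T\tilde T$ is a nonzero scalar; (ii) $\tilde{T}T\in\Z^\times=\Cl^0$ (for $n=6$, $\Z=\Cl^0$), so $T\in\A$, and similarly $\hat{\tilde T}T\in\Cl^0$ so $T\in\B$, whence $T\in\A\cap\B=\Q$; (iii) $Te_aT^{-1}\notin\Cl^1$ for some generator $e_a$, so $T\notin\Gamma^1=\Gamma$. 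Part (iii) is where the choice of $T$ matters: a product of two disjoint bivector exponentials will send $e_a$ to a grade-$1$ plus grade-$3$ plus grade-$5$ combination unless the bivectors are ``aligned'' with $e_a$, so picking $e_a$ transverse to both bivector planes produces the needed grade-$3$ term.

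The main obstacle is the low-dimensional case-checking for $\Q\subseteq\Gamma$ when $n\leq 5$: one has to verify, grade by grade ($k=1,2,3,4,5$) and dimension by dimension ($n=2,3,4,5$, the cases $n=0,1$ being trivial or already handled), that the mod-$4$ grading constraint from $T\in\Gamma^{\overline{k}}$ together with the mod-$2$ constraint from $T\in\P$ leaves no room for the grade to jump by $\pm 2$ or $\pm 4$. The delicate point is $n=5$, $k=1$: here $\Cl^{\overline{1}}\cap\Cl^{(1)}=\Cl^1\oplus\Cl^5$ genuinely has two grades, and separating them requires the extra input that $Te_aT^{-1}$ is reversion-invariant while a nonzero $\Cl^5$ part would have to be compensated — this is exactly where one invokes that $T$ also lies in $\Gamma^{\overline{3}}$ and that in $\cl$ with $n=5$ one has $\Cl^5\subset\Z$, so $Te_aT^{-1}$ has trivial grade-$5$ component iff $e_a$ does, because conjugation fixes the center. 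Once these finitely many checks are organized (ideally by invoking Lemma \ref{lemmagammakq} and Lemma \ref{lemmagammaknk} to cut the workload roughly in half), the argument is routine; the $n=6$ counterexample is then a short explicit computation.
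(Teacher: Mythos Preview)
Your overall architecture matches the paper's: for $n\leq 4$ use $\Cl^{\overline{1}}=\Cl^1$ so that $\Q=\Gamma^{\overline{1}}=\Gamma^1=\Gamma$; for $n=5$ rule out the $\Cl^5$-component of $Te_aT^{-1}$; for $n=6$ exhibit an explicit $T\in\Q\setminus\Gamma$. But two of your concrete steps do not work as written, and a third is unnecessary.

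\textbf{The $n=5$ step.} Your first justification, via reversion, fails: for $n=5$ one has $\widetilde{e_{12345}}=e_{12345}$, so both $\Cl^1$ and $\Cl^5$ lie in the $+1$-eigenspace of $\tilde{\ }$, and the identity $\widetilde{Te_aT^{-1}}=Te_aT^{-1}$ gives no new information beyond $Te_aT^{-1}\in\Cl^{\overline{1}}$. Your second justification, ``conjugation fixes the center,'' reaches the right conclusion but is not a proof: $\ad_T|_{\Z}=\mathrm{id}$ does \emph{not} by itself imply that the projection onto $\Z$ commutes with $\ad_T$. What you need is either the cyclicity $\langle AB\rangle_0=\langle BA\rangle_0$ of the scalar part (this is exactly how the paper argues: write $Te_aT^{-1}=w+\lambda e_{1\ldots 5}$, multiply by $e_{1\ldots 5}^{-1}\in\Z$, and take $\langle\,\cdot\,\rangle_0$ to get $\lambda=\langle e_{1\ldots 5}^{-1}e_a\rangle_0=0$), or the equivalent observation that $\Cl=\Z\oplus[\Cl,\Cl]$ with both summands $\ad_T$-stable. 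Either way this is a one-line fix, but it should be stated.

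\textbf{The $n=6$ counterexample.} Your first named candidate $T=e_{12}+2e_{34}$ does \emph{not} lie in $\Q$ for $n=6$: as you yourself computed in Lemma~\ref{lemQ}, $\tilde{T}T=-(e_{12})^2-4(e_{34})^2-4e_{1234}$ has a nonzero grade-$4$ part, so $\tilde{T}T\notin\Cl^0=\Z$. Your other candidate $T=e+e_{123456}$ \emph{does} work (and so does the paper's choice $T=\tfrac{1}{\sqrt{2}}(e_{12}+e_{3456})$): one checks $\tilde{T}T=e-(e_{123456})^2\in\Cl^{\times 0}$ and, since $e_{123456}$ anticommutes with $e_1$, a two-line computation gives $Te_1T^{-1}=-e_{23456}\in\Cl^5\setminus\Cl^1$. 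You should commit to one element and carry out the three checks rather than leave a menu.

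\textbf{The extra grade checks.} Once you have shown $Te_aT^{-1}\in\Cl^1$ for all $a$, you are done by \eqref{2.2}; there is no need for ``the same parity-plus-mod-$4$ bookkeeping for grades $2,3,4,5$.'' That paragraph can be deleted.
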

\begin{proof}
In the cases $n\leq 4$, we have
\begin{eqnarray}
\Q&=&\Gamma^{\overline{1}}=\{T\in\Cl^\times:\quad T\Cl^{\overline{1}}T^{-1}\subseteq \Cl^{\overline{1}}\}\nonumber\\
&=&\{T\in\Cl^\times:\quad T\Cl^{1}T^{-1}\subseteq \Cl^{1}\}=\Gamma^1\nonumber
\end{eqnarray}
because $\Cl^{\overline{1}}=\Cl^1$. In the case $n=5$, we have for $T\in\Q$
$$T\Cl^1T^{-1}\subseteq \Cl^{\overline{1}}=\Cl^1 \oplus\Cl^5.$$
Suppose that
$$TvT^{-1}=w+\lambda e_{1\ldots 5},\qquad v, w\in\Cl^1$$
with some constant $\lambda$. Then
\begin{eqnarray}
\lambda e&=&e_{1\ldots 5}^{-1}TvT^{-1}-e_{1\ldots 5}^{-1}w=\la e_{1\ldots 5}^{-1}TvT^{-1}-e_{1\ldots 5}^{-1}w\ra_0\nonumber\\
&=&\la e_{1\ldots 5}^{-1}TvT^{-1} \ra_0= \la e_{1\ldots 5} v \ra_0=0,\nonumber
\end{eqnarray}
where we use the property $\la AB\ra_0=\la BA \ra_0$ of the scalar part operation \cite{Lounesto} (it is the projection onto the subspace $\Cl^0$)\footnote{This operation coincides with the trace of the corresponding matrix representation up to the scalar, which is the dimension of this representation (see \cite{Bulg}).}, and $e_{1\ldots 5}\in\Z$.

In the case $n=p=6$, $q=0$, consider the element
$$T=\frac{1}{\sqrt{2}}(e_{12}+e_{3456}) \in \Cl^{(0)}.$$
We have
$$\tilde{T} T=\frac{1}{2}(-e_{12}+e_{3456})(e_{12}+e_{3456})=e$$
and
$$T e_1 T^{-1}= \frac{1}{2}(e_{12}+e_{3456})e_1(-e_{12}+e_{3456})=-e_{23456}\notin \Cl^1,$$
but $e_{23456}\in\Cl^{\overline{1}}$. Thus $T\in\Q$, $T\notin\Gamma$, and $\Q\neq\Gamma$ in this case.
\end{proof}

Note that the following analogue of the statement of the previous theorem is known: in the cases $n\leq 5$, if $T\in \Cl^{(0)}$ and $\tilde{T}T=\pm e$, then $T\Cl^1T^{-1}\subseteq \Cl^1$.
That is why the spin group in the cases $n\leq 5$ (see, for example, \cite{Bulg}) can be defined as
$$\Spin(p,q)=\{T\in \Cl^{(0)},\, \tilde{T} T=\pm e\},\qquad n\leq 5.$$

\begin{lem}\label{lem7} We have
\begin{eqnarray}
&&\A=\Q=\P=\Z^\times(\Cl^{\times (0)}\cup\Cl^{\times (1)}),\qquad n\leq 3;\label{b1}\\
&&\Q\neq \A,\qquad n=4.\label{b2}
\end{eqnarray}
\end{lem}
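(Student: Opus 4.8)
The plan is to treat $n\le 3$ and $n=4$ separately; in both cases the statement reduces to facts already proved above, plus, for $n=4$, one explicit witness. For $n\le 3$ there is nothing to compute: by the lemma preceding Theorem~\ref{th4t} we have $\B=\Cl^\times$ in this range, so $\A=\A\cap\Cl^\times=\A\cap\B$, which equals $\Q$ by Lemma~\ref{lemQ}; and Lemma~\ref{lemQ} also gives $\Q=\P=\Z^\times(\Cl^{\times (0)}\cup\Cl^{\times (1)})$ for $n\le 3$ (equation~(\ref{rty})). Chaining these equalities yields~(\ref{b1}).

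For $n=4$ I would exhibit an element of $\A$ that is not homogeneous for the grade involution. Since $n$ is even, $\P=\Cl^{\times (0)}\cup\Cl^{\times (1)}$ and, by Lemma~\ref{lemQ}, $\Q=\A\cap\P$; hence any invertible $T$ with $\tilde{T}T\in\Cl^{\times 0}$ whose even and odd parts are both nonzero lies in $\A\setminus\Q$. I propose $T=e+e_{123}+e_{124}$, whose even part $e$ and odd part $e_{123}+e_{124}$ are both nonzero, so $T\notin\P$. To check $T\in\A$ --- equivalently (as $\Z=\Cl^0$ for even $n$) that $\tilde{T}T\in\Cl^{\times 0}$ --- use $\widetilde{e_{abc}}=-e_{abc}$ to get $\tilde{T}=e-e_{123}-e_{124}$ and
$$\tilde{T}T=e-e_{123}^2-e_{124}^2-(e_{123}e_{124}+e_{124}e_{123}).$$
A short computation, sliding the shared generators $e_1,e_2$ through, gives $e_{123}e_{124}=-\eta_{11}\eta_{22}e_{34}=-(e_{124}e_{123})$, so the anticommutator vanishes; with $e_{123}^2=-\eta_{11}\eta_{22}\eta_{33}\,e$ and $e_{124}^2=-\eta_{11}\eta_{22}\eta_{44}\,e$ this yields $\tilde{T}T=\bigl(1+\eta_{11}\eta_{22}(\eta_{33}+\eta_{44})\bigr)e$. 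Inspecting the five real signatures $(p,q)$ with $p+q=4$ and the complex case (where the coefficient is $3$) shows this scalar never vanishes, so $T$ is invertible with $T^{-1}=(\tilde{T}T)^{-1}\tilde{T}$ and $T\in\A$. Hence $T\in\A\setminus\Q$, which proves~(\ref{b2}).

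The $n\le 3$ half is pure bookkeeping with the already-established identities $\Q=\A\cap\B$, $\Q=\P$ and $\B=\Cl^\times$; the only real (if minor) obstacle is in the $n=4$ part: one must choose the witness so that $\tilde{T}T$ stays a nonzero scalar \emph{simultaneously} over all admissible signatures --- the obvious candidates such as $e_{12}+e_3$ or a single $e+e_{ijk}$ degenerate in the split or anti-Euclidean cases, whereas in $e+e_{123}+e_{124}$ the grade-$2$ and grade-$4$ cross terms cancel identically and the leftover scalar is always nonzero.
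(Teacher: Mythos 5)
Your proposal is correct, and both halves check out, but the $n\le 3$ half follows a genuinely different route from the paper. The paper proves $\A=\Q$ for $n\le 3$ by first showing $\A\subseteq\Gamma$: since $\Cl^{\overline{01}}=\Cl^0\oplus\Cl^1$ in these dimensions, $T\in\A$ conjugates $\Cl^1$ into $\Cl^0\oplus\Cl^1$, and invariance of the scalar part $\la TUT^{-1}\ra_0=\la U\ra_0$ kills the grade-$0$ component; it then invokes Theorem \ref{thCap} ($\Gamma=\Q$ for $n\le 5$) together with $\Q\subseteq\A$, plus $\Q=\P$ from (\ref{rty}). Your argument bypasses both the scalar-part step and Theorem \ref{thCap} entirely: you use the norm lemma of Section \ref{sectB} ($\B=\Cl^\times$ for $n\le 3$, since $\hat{\tilde T}T\in\Cl^{\overline{03}}\subseteq\Z$ there) and then just intersect, $\A=\A\cap\B=\Q$ by Lemma \ref{lemQ}, chaining with (\ref{rty}). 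This is pure bookkeeping from earlier lemmas and is arguably shorter; the paper's detour buys the additional fact $\A\subseteq\Gamma$ for $n\le 3$, which ties $\A$ to the Clifford group directly. For $n=4$ your strategy is the same as the paper's (exhibit an element of $\A\setminus\P$, which cannot lie in $\Q=\A\cap\P$), only with a different witness: the paper takes $S=e+2e_{123}$, for which $\tilde SS=(1+4\eta_{11}\eta_{22}\eta_{33})e\in\{5e,-3e\}$ is a nonzero scalar in every signature, so the coefficient $2$ already removes the degeneracy you worry about for $e+e_{ijk}$ with unit coefficient; your $T=e+e_{123}+e_{124}$, with the anticommutator $e_{123}e_{124}+e_{124}e_{123}=0$ and $\tilde TT=\bigl(1+\eta_{11}\eta_{22}(\eta_{33}+\eta_{44})\bigr)e\in\{3e,e,-e\}$, is an equally valid (signature-uniform) choice, just not a necessary complication.
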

\begin{proof} We have $\Q=\P$ for $n\leq 3$ by Lemma \ref{lemQ}. Let us prove that $\Q=\A$ for $n\leq 3$. If $T\in\A$, then $T\Cl^{\overline{01}}T^{-1}\subseteq\Cl^{\overline{01}}$, in particular,
$$T\Cl^1T^{-1}\subseteq \Cl^{0}\oplus\Cl^1.$$
Using the property $\la TUT^{-1} \ra_0=\la U \ra_0$  of the scalar part operation, we get
$$T\Cl^1T^{-1}\subseteq \Cl^1.$$
Thus, $\A\subseteq \Gamma$ in the cases $n\leq 3$. We have $\Gamma=\Q$ for $n\leq 5$, and $\Q\subseteq \A$. Thus we get $\A=\Q$ for $n\leq 3$.

In the case $n=4$, for the element
$$
S=e+2e_{123} \notin\Cl^{\times (0)}\cup\Cl^{\times (1)},
$$
we have
$$
\tilde{S}S=(e-2e_{123})(e+2e_{123})=e-4(e_{123})^2\in\Cl^{\times 0}.
$$
Thus $S\in\A$, $S\notin\Q$, and $\A\neq \Q$ in the case $n=4$.
\end{proof}

\begin{lem}\label{lemmagammakn} We have the following relations in the cases of small dimensions
\begin{eqnarray}
&&\Gamma^1=\Gamma^2=\Q=\P=\A,\qquad n=3;\\
&&\Gamma^1=\Gamma^3=\Q\neq\Gamma^2=\Q^\prime=\P,\qquad n=4;\\
&&\Gamma^1=\Gamma^2=\Gamma^3=\Gamma^4=\Q,\qquad n=5.
\end{eqnarray}
\end{lem}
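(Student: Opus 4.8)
The plan is to derive all three lines by combining the two ``collapse'' facts already available in small dimensions --- Theorem \ref{thCap} (so $\Gamma=\Q$ for $n\le 5$) and Lemma \ref{lem7} (so $\A=\Q=\P$ for $n\le 3$) --- with the inclusions $\Gamma\subseteq\Gamma^k$ (Theorem \ref{th2}) and $\Gamma^k\subseteq\Q$ (respectively $\Gamma^k\subseteq\Q^\prime$) from Lemma \ref{lemmagammakq}, together with the elementary observation that for small $n$ a subspace $\Cl^k$ of fixed grade can coincide with one of the subspaces (\ref{quat}).

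First I would handle $n=3$ and $n=5$ together. For any $n\le 5$ and any $k\in\{1,\dots,n-1\}$ we have $\Q=\Gamma\subseteq\Gamma^k$ by Theorems \ref{thCap} and \ref{th2}. When in addition $n=1, 2, 3\mod 4$, Lemma \ref{lemmagammakq} gives $\Gamma^k\subseteq\Q$ for every such $k$, hence $\Gamma^k=\Q$. For $n=3$ this gives $\Gamma^1=\Gamma^2=\Q$, and Lemma \ref{lem7} adds $\Q=\P=\A$, which is the first line. For $n=5$ it gives $\Gamma^1=\Gamma^2=\Gamma^3=\Gamma^4=\Q$, which is the third line.

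The case $n=4$ is the only one requiring care, since here $n=0\mod 4$ and Lemma \ref{lemmagammakq} treats odd and even grades differently. For the odd grades $k=1,3$ the argument above still applies verbatim ($\Gamma^1,\Gamma^3\subseteq\Q$ from Lemma \ref{lemmagammakq}, and $\Q\subseteq\Gamma^1,\Gamma^3$ from Theorems \ref{thCap} and \ref{th2}), so $\Gamma^1=\Gamma^3=\Q$. For $k=2$ I would use that when $n=4$ the only grade congruent to $2$ modulo $4$ is $2$ itself, so $\Cl^{\overline{2}}=\Cl^2$ and therefore $\Gamma^{\overline{2}}=\Gamma^2$; since Theorem \ref{th3} identifies $\Gamma^{\overline{2}}$ with $\Q^\prime$ in the case $n=0\mod 4$, this yields $\Gamma^2=\Q^\prime$. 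Finally Lemma \ref{lemQQ} supplies $\Q^\prime=\P$ and $\Q\neq\Q^\prime$ for $n=4$, which closes the chain $\Gamma^1=\Gamma^3=\Q\neq\Gamma^2=\Q^\prime=\P$, the second line. I do not expect a real obstacle: every inclusion invoked has already been established, and the only mildly delicate point --- recognizing $\Cl^{\overline{k}}=\Cl^k$ for the relevant small $n$ and $k$ --- is precisely the device already used in the proof of Theorem \ref{thCap}, where $\Cl^{\overline{1}}=\Cl^1$ for $n\le 4$.
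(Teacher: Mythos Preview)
Your proposal is correct and follows essentially the same route as the paper's proof: sandwich $\Gamma^k$ between $\Gamma=\Q$ (Theorems \ref{th2} and \ref{thCap}) and $\Q$ (Lemma \ref{lemmagammakq}) to get $\Gamma^k=\Q$ for the relevant $k$, and in the $n=4$ case use $\Cl^{\overline{2}}=\Cl^2$ together with Theorem \ref{th3} and Lemma \ref{lemQQ} to identify $\Gamma^2=\Q^\prime=\P$ and obtain $\Q\neq\Q^\prime$. Your write-up is simply a bit more explicit about the individual citations than the paper's terse version.
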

\begin{proof} We have $\Gamma^1=\Q$ for $n\leq 5$. If we have $\Gamma^1\subseteq \Gamma^k\subseteq \Q$ for some fixed $k$ (see Lemma \ref{lemmagammakq}), then we can conclude that $\Gamma^1=\Gamma^k=\Q$ for this $k$. In the case $n=4$, we get $\Gamma^2=\Gamma^{\overline{2}}=\Q^\prime=\P$ from Theorem \ref{th3}.
\end{proof}

Note that $\Gamma^1 \neq \Gamma^3$ in the case $n=6$. This fact follows from Theorem \ref{th3} (we have $\Gamma^3=\Gamma^{\overline{3}}=\Q$ in the case $n=6$) and Theorem \ref{thCap} (we have $\Gamma\neq \Q$ in the case $n=6$). For example, the element $T=e+2e_{123456}$ is invertible because $$(e+2e_{123456})(e-2e_{123456})=e-4(e_{123456})^2,$$
is a nonzero scalar. We have
$$(e+2e_{123456})U(e-2e_{123456})=(e+2e_{123456})^2 U\in\Cl^3$$
for an arbitrary $U\in\Cl^3$ because $e_{123456}$ anticommutes with all odd elements. Also we have
$$(e+2e_{123456})e_{1}(e-2e_{123456})=(e+2e_{123456})^2 e_{1}\in\Cl^1\oplus\Cl^5.$$
Thus $T\notin\Gamma$ and $T\in\Gamma^3$.

\section{The corresponding Lie algebras}\label{sectLieAlg}

It is well-known (see, for example, \cite{BT, Bulg}) that the Lie algebra of the Lie group $\Gamma$ is the following subspace with respect to the operation of commutator
$$\mathfrak{\gamma}:=\left\lbrace
\begin{array}{ll}
\Cl^0\oplus\Cl^{2}, & \mbox{if $n$ is even,}\\
\Cl^0\oplus\Cl^2\oplus\Cl^{n}, & \mbox{if $n$ is odd,}
\end{array}\nonumber
\right.$$
with dimension
$$\dim \Gamma=\dim \mathfrak{\gamma}=\left\lbrace
\begin{array}{ll}
\frac{n(n-1)}{2}+1, & \mbox{if $n$ is even,}\\
\frac{n(n-1)}{2}+2, & \mbox{if $n$ is odd.}
\end{array}\nonumber
\right.$$

\begin{thm} The Lie groups $\P$, $\A$, $\B$, $\Q$, and $\Q^\prime$
have the corresponding Lie algebras\footnote{We present Lie algebras for all Lie groups considered in this paper for the sake of completeness. Some of them are known.} $\mathfrak{p}$, $\mathfrak{a}$, $\mathfrak{b}$, $\mathfrak{q}$, and $\mathfrak{q}^\prime$ illustrated in Table~\ref{table1} with corresponding dimensions.
Also we have
\begin{eqnarray}
&&\mathfrak{\gamma}\subseteq \mathfrak{q}\subseteq \mathfrak{p},\qquad \mathfrak{q}\subseteq \mathfrak{a},\qquad \mathfrak{q}\subseteq \mathfrak{b},\qquad\mathfrak{q}=\mathfrak{a}\cap \mathfrak{p}=\mathfrak{b}\cap \mathfrak{p}=\mathfrak{a}\cap \mathfrak{b};\nonumber\\
&&\mathfrak{q}=\mathfrak{p}=\mathfrak{a},\qquad n\leq 3;\label{liealg}\\
&&\mathfrak{q}\neq \mathfrak{p},\qquad \mathfrak{q}\neq \mathfrak{a},\qquad \mathfrak{p}\neq \mathfrak{a},\qquad n=4;\nonumber\\
&&\mathfrak{\gamma}=\mathfrak{q},\qquad n\leq 5;\qquad \mathfrak{\gamma}\neq \mathfrak{q},\qquad n=6.\nonumber
\end{eqnarray}
In the cases $n=0\mod 4$, we have
\begin{eqnarray}
&&\mathfrak{q}\subseteq \mathfrak{q}^\prime\subseteq \mathfrak{p},\qquad \mathfrak{\gamma}\subseteq \mathfrak{q}^\prime;\qquad \mathfrak{q}\neq \mathfrak{q}^\prime,\qquad n\geq 4;\label{liealg2}\\
&&\mathfrak{q}^\prime=\mathfrak{p},\qquad n=4;\qquad \mathfrak{q}^\prime\neq \mathfrak{p},\qquad n\geq 8.\nonumber
\end{eqnarray}
\end{thm}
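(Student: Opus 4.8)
The plan is to compute each Lie algebra as the tangent space at the identity of the corresponding Lie group, i.e. as the set of $X\in\Cl$ such that $e+\varepsilon X$ satisfies the defining condition to first order in $\varepsilon$, and then to read off the dimensions and the inclusions. First I would treat $\mathfrak{p}$: the condition $T\Cl^{(0)}T^{-1}\subseteq\Cl^{(0)}$ linearizes (using $(e+\varepsilon X)^{-1}=e-\varepsilon X+O(\varepsilon^2)$) to $[X,\Cl^{(0)}]\subseteq\Cl^{(0)}$. Since $[\Cl^{(0)},\Cl^{(0)}]\subseteq\Cl^{(0)}$ always and $[\Cl^{(1)},\Cl^{(0)}]\subseteq\Cl^{(1)}$, the even part $\Cl^{(0)}$ always lies in $\mathfrak{p}$, and the odd part contributes only those $X_1\in\Cl^{(1)}$ with $[X_1,\Cl^{(0)}]=0$; by Lemma~\ref{lemmacenter2} the centralizer of $\Cl^{(0)}$ is $\Cl^0\oplus\Cl^n$, whose odd part is $\Cl^n$ if $n$ is odd and $0$ if $n$ is even. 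This gives $\mathfrak{p}=\Cl^{(0)}$ for even $n$ and $\mathfrak{p}=\Cl^{(0)}\oplus\Cl^n=\Cl^{(0)n}$ for odd $n$, with dimensions $2^{n-1}$ and $2^{n-1}+1$, matching Table~\ref{table1}.

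Next I would do $\mathfrak{a}$ and $\mathfrak{b}$. Linearizing $\tilde T T\in\Z^\times$ at $T=e+\varepsilon X$ gives $\tilde X+X\in\Z$, i.e. $\langle X+\tilde X\rangle$ has no components outside the center. Writing $X=\sum_k X_k$ with $X_k\in\Cl^k$ and using $\tilde{X_k}=(-1)^{k(k-1)/2}X_k$, the combination $X_k+\tilde X_k$ is $2X_k$ when $k\equiv 0,1\pmod 4$ and $0$ when $k\equiv 2,3\pmod 4$. Hence $\mathfrak{a}=\{X:\ X_k=0\text{ for }k\not\equiv 2,3\ (4),\ k\neq 0, k\neq n\}=\Cl^{0\overline{23}}$ for $n\not\equiv 1\pmod 4$ and $\Cl^{0\overline{23}n}$ for $n\equiv 1\pmod 4$; the $n$-summand appears exactly when $n\equiv 1\pmod 4$ because only then is $\Cl^n$ not already inside $\Cl^{\overline{23}}$ or $\Cl^0$. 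The dimension is $1+\dim\Cl^{\overline 2}+\dim\Cl^{\overline 3}$ (plus $1$ in the $n\equiv1\pmod4$ case), and using the standard formula $\dim\Cl^{\overline m}=2^{n-2}+2^{(n-2)/2}\cos\!\big(\tfrac{\pi(n-2m)}{4}\big)$ (equivalently the known $\sin/\cos$ expressions) one recovers $2^{n-1}-2^{(n-1)/2}\sin\!\big(\tfrac{\pi(n+1)}{4}\big)+\{1\text{ or }2\}$. The computation for $\mathfrak{b}$ is identical with $\hat{\tilde{X}}$ in place of $\tilde X$: now $\hat{\tilde{X_k}}=(-1)^{k(k+1)/2}X_k$, so $X_k+\hat{\tilde X_k}$ survives for $k\equiv 0,3\pmod 4$ and vanishes for $k\equiv 1,2\pmod 4$, giving $\mathfrak{b}=\Cl^{0\overline{12}}$ (or $\Cl^{0\overline{12}n}$ when $n\equiv 3\pmod 4$) with the $\cos$-dimension in Table~\ref{table1}.

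For $\mathfrak{q}$ and $\mathfrak{q}^\prime$ I would intersect: since $\Q=\A\cap\P=\B\cap\P=\A\cap\B$ (Lemma~\ref{lemQ}) and the Lie-algebra functor respects finite intersections of Lie subgroups, $\mathfrak{q}=\mathfrak{a}\cap\mathfrak{p}=\mathfrak{b}\cap\mathfrak{p}=\mathfrak{a}\cap\mathfrak{b}$. Concretely, $\mathfrak{a}\cap\mathfrak{b}$ keeps the grades $k$ with $k\equiv 0\pmod 4$ (common to $\{0,1\}$ and $\{0,3\}$ mod $4$) together with $\Cl^0$, i.e. $\mathfrak{q}=\Cl^{0\overline 2}$, plus $\Cl^n$ exactly when $n\equiv 1,3\pmod 4$; this gives dimension $1+\dim\Cl^{\overline 2}$ (or $+2$), and $\dim\Cl^{\overline 2}=2^{n-2}-2^{(n-2)/2}\cos\!\big(\tfrac{\pi n}{4}\big)$ yields the table entry. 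For $n\equiv 0\pmod 4$, $\Q^\prime=\A^\prime\cap\B^\prime$ (Lemma~\ref{lemQQ}), and linearizing the condition $\tilde T T\in(\Cl^0\oplus\Cl^n)^\times$ adds back the $\Cl^n$ summand, so $\mathfrak{q}^\prime=\Cl^{0\overline 2 n}$, one dimension larger than $\mathfrak{q}$. Finally I would transcribe the inclusions and (non)equalities in \eqref{liealg}–\eqref{liealg2} directly from the corresponding group-level statements (Theorems~\ref{th1},~\ref{th3t},~\ref{th4t},~\ref{th3},~\ref{thCap}, Lemmas~\ref{lemQ},~\ref{lemQQ},~\ref{lem7}): $\mathfrak{\gamma}\subseteq\mathfrak{q}\subseteq\mathfrak{p}$, $\mathfrak{q}\subseteq\mathfrak{a}$, $\mathfrak{q}\subseteq\mathfrak{b}$ are immediate from the subspace descriptions, the equalities for $n\le 3$ and $n\le 5$ follow because the listed subspaces literally coincide in those dimensions (e.g. $\Cl^{\overline 2}=\Cl^2$ for $n\le 5$, so $\mathfrak{q}=\Cl^{02}=\mathfrak{\gamma}$ for even $n\le 4$ and $\Cl^{02n}$ for $n=3,5$), and the inequalities for $n=4$ and $n\ge 6,8$ follow by exhibiting a grade not shared between the two subspaces (e.g. $\Cl^4\subseteq\mathfrak{q}^\prime\setminus\mathfrak{q}$ for $n\equiv0\pmod4$, $n\ge4$, and $\Cl^n\subseteq\mathfrak{a}\setminus\mathfrak{p}$ for $n=4$). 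The main obstacle is bookkeeping rather than conceptual: one must be careful that ``Lie algebra of the group'' is well-defined here — each of these groups is closed in $\Cl^\times$ and is cut out by polynomial/linear conditions, hence an embedded Lie subgroup — and that the first-order computation genuinely captures the whole tangent space, which is clear because each defining condition (membership in a linear subspace being preserved, or a quadratic ``norm'' lying in a linear subspace) has surjective differential onto the relevant quotient; I would state this once at the start and then let the grade-by-grade linear algebra run.
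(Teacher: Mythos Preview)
Your approach is correct and is essentially the same as the paper's (which merely says ``use standard facts about the Lie group/Lie algebra correspondence'' and then quotes the dimension formulas for $\Cl^{\overline m}$); you have simply written out explicitly the linearization and grade-by-grade bookkeeping that the paper leaves to the reader. Two cosmetic slips to fix: in the $\mathfrak a\cap\mathfrak b$ paragraph you write ``keeps the grades $k\equiv 0\pmod 4$'' but then (correctly) conclude $\mathfrak q=\Cl^{0\overline 2}$, so the sentence should read $k\equiv 2\pmod 4$; and for $n=4$ your witness for $\mathfrak p\neq\mathfrak a$ is reversed --- $\Cl^4\subset\mathfrak p\setminus\mathfrak a$ (or use $\Cl^3\subset\mathfrak a\setminus\mathfrak p$), not $\Cl^n\subset\mathfrak a\setminus\mathfrak p$.
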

Note that the statements (\ref{liealg}) are analogues of the corresponding statements (\ref{yyy4}), (\ref{rty}), (\ref{rty2}), (\ref{yyy7}), (\ref{yyy8}), (\ref{yyy6}), (\ref{b1}), (\ref{b2}) for the Lie groups . The statements (\ref{liealg2}) are analogues of the corresponding statements (\ref{QQprime}), (\ref{p1}), (\ref{p2}), (\ref{p3}), (\ref{yyy9}) for the Lie groups.

\begin{proof} To prove these statements, we use the standard facts about the relation between an arbitrary Lie group and the corresponding Lie algebra.

%Let $T$ be an arbitrary element of one of the considered Lie groups. Then $T=e+\varepsilon t$, where $\varepsilon^2=0$ and $t$ is an arbitrary element of the corresponding Lie algebra. We must also compare the corresponding dimensions.
%
%For $T\in\P$, we  obtain $t\in\Cl^{(0)}$ in the case of even $n$ and $t\in\Cl^{(0)}\oplus\Cl^n$ in the case of odd $n$. We obtain dimensions of $\P$ and $\mathfrak{p}$, which are the same and equal $2^{n-1}$ in the case of even $n$ and $2^{n-1}+1$ in the case of odd $n$.
%
%For $T\in \A$, we get
%$\tilde{T}T=(e+\varepsilon \tilde{t})(e+\varepsilon t)=e+\varepsilon(t+\tilde{t})\in \Z^\times.$
%We have $t+\tilde{t}=0$ for $t\in\Cl^{\overline{23}}$ and additionally $t+\tilde{t}\in \Z$ for $t\in \Z$.
%
%For $T\in\B$, we get
%$\hat{\tilde{T}}T=(e+\varepsilon \hat{\tilde{t}})(e+\varepsilon t)=e+\varepsilon(t+\hat{\tilde{t}})\in \Z^\times.$
%We have $t+\hat{\tilde{t}}=0$ for $t\in\Cl^{\overline{12}}$ and additionally $t+\hat{\tilde{t}}\in \Z$ for $t\in \Z$.
%
%For $T\in\Q$, we get
%$\tilde{T}T=(e+\varepsilon \tilde{t})(e+\varepsilon t)=e+\varepsilon(t+\tilde{t})\in \Z^\times.$
%We have $t+\hat{\tilde{t}}=0$, $t\in\Cl^{(0)}$ for $t\in\Cl^{\overline{2}}$ and additionally $t+\tilde{t}\in \Z$ for $t\in \Z$.
%
%In the case $n=0\mod 4$, for $T\in\Q^\prime$, we get
%$\tilde{T}T=(e+\varepsilon \tilde{t})(e+\varepsilon t)=e+\varepsilon(t+\tilde{t})\in \Cl^{\times 0 n}.$
%We have $t+\hat{\tilde{t}}=0$, $t\in\Cl^{(0)}$ for $t\in\Cl^{\overline{2}}$ and additionally $t+\tilde{t}\in \Cl^{\times 0 n}$ for $t\in \Cl^{\times 0 n}$.

To calculate dimensions, we use (see, for example, \cite{Lie3})\footnote{As one of the anonymous reviewers noted, the dimensions of the four subspaces $\Cl^{\overline{k}}$, $k=0, 1, 2, 3$ in the case of more general algebras (GCSAsWI) are also known from other works.}
%$\dim\cl^0_{p,q}=\dim\cl^n_{p,q}=1$ and
\begin{eqnarray}
\dim \Cl^{\overline{2}}&=&2^{n-2}-2^{\frac{n-2}{2}}\cos (\frac{\pi n}{4}),\qquad \dim\Cl^{(0)}=2^{n-1},\nonumber\\
\dim \Cl^{\overline{23}}&=&\dim \Cl^{\overline{2}}+\dim \Cl^{\overline{3}}=2^{n-2}-2^{\frac{n-2}{2}}\cos (\frac{\pi n}{4})\nonumber\\
&+&2^{n-2}-2^{\frac{n-2}{2}}\sin (\frac{\pi n}{4})=2^{n-1}-2^{\frac{n-1}{2}}\sin (\frac{\pi(n+1)}{4}),\nonumber\\
\dim \Cl^{\overline{12}}&=&\dim \Cl^{\overline{2}}+\dim \Cl^{\overline{1}}=2^{n-2}-2^{\frac{n-2}{2}}\cos (\frac{\pi n}{4})\nonumber\\
&+&2^{n-2}+2^{\frac{n-2}{2}}\sin (\frac{\pi n}{4})=2^{n-1}-2^{\frac{n-1}{2}}\cos (\frac{\pi(n+1)}{4}).\nonumber
\end{eqnarray}
All statements (\ref{liealg}), (\ref{liealg2}) are easily verified using definitions of the corresponding Lie algebras.
\end{proof}

\section{The cases of small dimensions $n\leq 5$}\label{sectSmallDim}

Let us write down all the Lie groups considered in this paper and the corresponding Lie algebras for the cases of small dimensions $n\leq 5$.

If $n=1$, then the Clifford algebra $\Cl$ is a commutative algebra and all considered Lie groups coincide with $\Cl^\times$. The corresponding Lie algebra is $\Cl$ considered w.r.t. the operation of commutator.

If $n=2$, then we have two different groups
\begin{eqnarray}
&&\Gamma^{\overline{0}}=\Gamma^{0}= \Gamma^{\overline{03}}= \Gamma^{\overline{12}}=\B=\Cl^\times,\nonumber\\
&&\Gamma^1=\Gamma^2=\Gamma^{\overline{2}}=\Gamma^{\overline{23}}= \Gamma^{\overline{01}}= \Gamma^{(0)}= \Gamma^{(1)}=\Gamma^{\overline{1}}\nonumber\\
&&=\Gamma=\Q=\P=\A=\Cl^{\times (0)}\cup\Cl^{\times (1)},\nonumber
\end{eqnarray}
with the corresponding Lie algebras $\Cl$ and $\Cl^{02}$ respectively.

If $n=3$, then we have two different groups
\begin{eqnarray}
&&\Gamma^{\overline{0}}=\Gamma^{0}=\Gamma^3=\Gamma^{\overline{3}}= \Gamma^{\overline{03}}= \Gamma^{\overline{12}}=\B=\Cl^\times,\nonumber\\
&&\Gamma^{1}=\Gamma^{(0)}=\Gamma^{(1)}=\Gamma^{\overline{1}}= \Gamma^2=\Gamma^{\overline{2}}=\Gamma^{\overline{23}}=\Gamma^{\overline{01}}\nonumber\\
&&=\Gamma=\Q=\P=\A=\Z^\times\Cl^{\times (0)},\nonumber
\end{eqnarray}
with the corresponding Lie algebras
$\Cl$ and $\Cl^{023}$ respectively.

If $n=4$, then we have five different groups
\begin{eqnarray*}
&&\Gamma^{0}=\Cl^\times,\nonumber\\
&&\Gamma^{(0)}=\Gamma^{(1)}=\Gamma^{\overline{0}}=\Gamma^4=\Gamma^{\overline{2}}= \Gamma^2=\P=\Q^\prime=\Cl^{\times (0)}\cup\Cl^{\times (1)},\nonumber\\
&&\Gamma^{1}=\Gamma^{\overline{1}}= \Gamma^3=\Gamma^{\overline{3}}=\Gamma=\Q=\{T\in\Cl^{\times (0)}\cup\Cl^{\times (1)}:\,\, \tilde{T} T\in \Cl^{0 \times}\},\nonumber\\
&&\Gamma^{\overline{03}}=\Gamma^{\overline{12}}=\B=\{T\in\Cl^\times:\quad \hat{\tilde{T}} T\in \Cl^{\times 0}\},\nonumber\\
&&\Gamma^{\overline{23}}=\Gamma^{\overline{01}}=\A=\{T\in\Cl^\times:\quad \tilde{T} T\in \Cl^{\times 0}\}.\nonumber
\end{eqnarray*}
with the corresponding Lie algebras
$\Cl$, $\Cl^{024}$, $\Cl^{02}$, $\Cl^{012}$, and $\Cl^{023}$ respectively.

If $n=5$, then we have five different groups
\begin{eqnarray}
&&\Gamma^{0}=\Gamma^5=\Cl^\times,\nonumber\\
&&\Gamma^{(0)}=\Gamma^{(1)}=\P=\Z^\times \Cl^{\times (0)},\nonumber\\
&&\Gamma^{1}=\Gamma^{2}=\Gamma^{3}=\Gamma^{4}=\Gamma^{\overline{1}}= \Gamma^{\overline{2}}=\Gamma^{\overline{3}}=\Gamma^{\overline{0}}\nonumber\\
&&=\Gamma=\Q=\{T\in\Z^\times \Cl^{\times (0)}:\quad \tilde{T} T\in \Cl^{0 \times}\},\nonumber\\
&&\Gamma^{\overline{03}}=\Gamma^{\overline{12}}=\B=\{T\in\Cl^\times:\quad \hat{\tilde{T}} T\in \Cl^{\times 0 n}\},\nonumber\\
&&\Gamma^{\overline{23}}=\Gamma^{\overline{01}}=\A=\{T\in\Cl^\times:\quad \tilde{T} T\in \Cl^{\times 0}\}.\nonumber
\end{eqnarray}
with the corresponding Lie algebras
$\Cl$, $\Cl^{0245}$, $\Cl^{025}$, $\Cl^{012}$, and $\Cl^{0235}$  respectively.

The relations between all the considered Lie groups are illustrated in Figure \ref{figure1}.

\section{Conclusions}\label{sectConc}

In this paper, we present an explicit form of the groups of elements that define inner automorphisms preserving different naturally defined subspaces (subspaces defined by the reversion and the grade involution and subspaces of fixed grades) of the real or complex Clifford algebra $\Cl$
\begin{eqnarray}
&&\Gamma^{(0)}=\Gamma^{(1)}=\P,\qquad \Gamma^{\overline{01}}=\Gamma^{\overline{23}}=\A,\qquad \Gamma^{\overline{03}}=\Gamma^{\overline{12}}=\B,\nonumber\\ &&\Gamma^{\overline{1}}=\Gamma^{\overline{3}}=\Q,\qquad \Gamma^{\overline{0}}=\Gamma^{\overline{2}}=\Q \quad\mbox{or}\quad \Q^\prime,\qquad \Gamma^k, \quad k=0, 1, \ldots, n.\nonumber
\end{eqnarray}
We present the corresponding Lie algebras. The relations between Lie groups (and Lie algebras) are illustrated in Table~\ref{table1} and Figure~\ref{figure1}.
One of this groups (the Clifford group $\Gamma$, which preserves the subspace of grade~$1$) is well-known and is widely used in the theory of spin groups. The other groups $\A, \B, \Q, \Q^\prime$ are related to the norm functions $\psi(T)=\tilde{T}T$ and $\chi(T)=\hat{\tilde{T}}T$ and can be also used in different applications.

Note that the spin groups are defined as normalized subgroups (using the norm functions $\psi$ and $\chi$) of the Lipschitz group
\begin{eqnarray}
\Gamma^\pm=\{T\in\Cl^{\times (0)}\cup\Cl^{\times (1)}:\,\, T \Cl^{1}T^{-1}\subseteq \Cl^{1}\},\nonumber
\end{eqnarray}
which is a subgroup of the Clifford group $\Gamma$ (we have $\Gamma^\pm\subseteq\Gamma$ and $\Gamma=\Z^\times\Gamma^\pm$). We can consider normalized subgroups (using the same norm functions $\psi$ and $\chi$) of the groups $\P$, $\A$, $\B$, $\Q$, $\Q^\prime$. These normalized subgroups can be interpreted as generalizations of the spin groups and can be used in different applications of Clifford algebras. In the papers \cite{Lie1, Lie2, Lie3}, we have already considered some of these groups (see the groups $\G^{23}_{p,q}$, $\G^{12}_{p,q}$, $\G^2_{p,q}$ in \cite{Lie3}) and found isomorphisms between these groups and classical matrix Lie groups. Some of these groups are related to automorphism
groups of the scalar products on the spinor spaces (see \cite{Lounesto, Port, BT, Abl}). %An interesting task is to find analogues of the decomposition (\ref{2.3}) for the groups $\P$, $\A$, $\B$, $\Q$, $\Q^\prime$ (and the corresponding analogues of the Cartan-Dieudonn\'e theorem).
%The results can be generalized to the groups $\Gamma^{k_1\ldots k_m}$ preserving subspaces $\Cl^{k_1\ldots k_m}$, which are direct sums of an arbitrary number $m$ of subspaces of fixed grades $k_1$, $k_2$, \ldots, $k_m$.
%In the particular case, to the case of subspaces $\Cl^{\overline{lrs}}$, which are direct sums of three subspaces of fixed quaternion types $l$, $r$, $s$.

All the groups considered in this paper contain spin groups as subgroups. The real Clifford algebras are isomorphic to the matrix algebras over $\R$, $\R\oplus\R$, $\C$, ${\mathbb H}$, or ${\mathbb H}\oplus{\mathbb H}$ depending on $p-q\mod 8$ and the complex Clifford algebras are isomorphic to the matrix algebras over $\C$ or $\C\oplus\C$ depending on $n\mod 2$. In the opinion of the author, the structure of naturally defined fundamental subspaces (subspaces of fixed grades and subspaces defined by the reversion and the grade involution) favourably distinguishes Clifford algebras from the corresponding matrix algebras, when we use them for applications.  Groups that preserve different subspaces of Clifford algebra under similarity transformation may be of interest for different applications of Clifford algebras  -- in physics, engineering, robotics, computer vision, image and signal processing (see about applications of various groups in physics using Clifford algebra formalism, for example, in \cite{Marchuk, Snygg}).

\subsection*{Acknowledgment}

The study presented in this paper was stimulated by discussions with Prof. A.~Odzijewicz and other participants during the International Conference on Mathematical Methods in Physics (Morocco, Marrakesh, 2019). The author is grateful to the organizers and the participants of this conference.

The author is grateful to the anonymous reviewers (especially to the second reviewer, who pointed out the feasibility of some of the results of this paper not only for the Clifford algebras but for more general algebras, as well as for other important comments) for their careful reading of the paper and helpful
comments on how to improve the presentation.

This work is supported by the grant of the President of the Russian Federation (project MK-404.2020.1).

% ------------------------------------------------------------------------
\end{document}